\documentclass[a4paper]{amsart}

\usepackage{amssymb,amsmath,amsthm,amscd,amsfonts,bbm,mathrsfs}

\usepackage[cmtip,all]{xy}

\newtheorem{lemma}{Lemma}
\newtheorem{thm}[lemma]{Theorem}
\newtheorem{prop}[lemma]{Proposition}
\theoremstyle{definition}

\newtheorem{rmk}[lemma]{Remark}

\newcommand{\AAA}{\mathcal{A}}
\newcommand{\BBB}{\mathscr{B}}

\newcommand{\DDD}{\mathscr{D}}
\newcommand{\EEE}{\mathcal{E}}
\newcommand{\FFF}{\mathcal{F}}
\newcommand{\GGG}{\mathcal{G}}
\newcommand{\JJJ}{\mathcal{J}}

\newcommand{\MMM}{\mathcal{M}}

\newcommand{\OOO}{\mathcal{O}}
\newcommand{\PPP}{\mathscr{P}}
\newcommand{\RRR}{\mathcal{R}}

\newcommand{\Fm}{\mathfrak{m}}

\newcommand{\FS}{\mathfrak{S}}

\newcommand{\DD}{{\mathbb{D}}}
\newcommand{\EE}{{\mathbb{E}}}
\newcommand{\FF}{{\mathbb{F}}}

\newcommand{\II}{{\mathbb{I}}}

\newcommand{\QQ}{{\mathbb{Q}}}
\newcommand{\WW}{{\mathbb{W}}}
\newcommand{\ZZ}{{\mathbb{Z}}}

\newcommand{\cc}{{\mathbbm{c}}}
\newcommand{\ff}{{\mathbbm{f}}}
\newcommand{\vv}{{\mathbbm{v}}}
\newcommand{\uu}{{\mathbbm{u}}}

\DeclareMathOperator{\BT}{BT}

\DeclareMathOperator{\Ext}{Ext}
\DeclareMathOperator{\Fil}{Fil}
\DeclareMathOperator{\Frac}{Frac}

\DeclareMathOperator{\Hom}{Hom}
\DeclareMathOperator{\Gal}{Gal}
\DeclareMathOperator{\Ker}{Ker}

\DeclareMathOperator{\Spec}{Spec}

\DeclareMathOperator{\Tor}{Tor}

\DeclareMathOperator{\cris}{cris}

\DeclareMathOperator{\id}{id}

\DeclareMathOperator{\nr}{nr}

\DeclareMathOperator{\per}{per}

\DeclareMathOperator{\sep}{sep}

\begin{document}

\title{Displayed equations for Galois representations}
\author{Eike Lau}
\address{Fakult\"{a}t f\"{u}r Mathematik,
Universit\"{a}t Bielefeld, D-33501 Bielefeld}
\email{lau@math.uni-bielefeld.de}

\date{\today}
\subjclass[2000]{14L05 \and 14F30}

\begin{abstract} 
The Galois representation associated to a
$p$-divisible group over a 
complete noetherian normal local ring
with perfect residue field is described in terms of its 
Dieudonn\'e display. 
As a corollary we deduce in arbitrary characteristic
Kisin's description of the Galois representation 
associated to a commutative finite flat 
$p$-group scheme over a $p$-adic discrete valuation ring
in terms of its Breuil-Kisin module.
This was obtained earlier by W.\ Kim by a different method.
\end{abstract}

\maketitle

\section*{Introduction}

\renewcommand{\thelemma}{\Alph{lemma}}

Let $R$ be a complete noetherian normal local ring with
perfect residue field $k$ of positive characteristic $p$
and with fraction field $K$ of characteristic zero.
For a $p$-divisible group $G$ over $R$, the Tate module
$T_p(G)$ is a free $\ZZ_p$-module of finite rank with a 
continuous action of the absolute Galois group $\GGG_K$. 
We want to describe the Tate module 
in terms of the Dieudonn\'e display
$\PPP=(P,Q,F,F_1)$ associated to $G$ in \cite{Zink-DDisp}
and \cite{Lau-Relation}, and relate this to other
descriptions of the Tate module 
when $R$ is a discrete valuation ring.

Let us recall that the Zink ring $\WW(R)$ is a
subring of the ring of Witt vectors $W(R)$ which is
stable under the Frobenius endomorphism $f$ of $W(R)$.
Let $v$ be the Verschiebung of $W(R)$ and let
$u_0\in W(R)$ be the unit defined by $u_0=1$ 
if $p$ is odd and by $v(u_0)=2-[2]$ if $p=2$.
The components of $\PPP$ as above are a finite free 
$\WW(R)$-module $P$,
a submodule $Q$ of $P$ such that $P/Q$ is a free $R$-module,
and $f$-linear maps $F:P\to P$ and $F_1:Q\to P$ 
such that the image of $F_1$ generates $P$, and 
$F_1(v(u_0a)x)=aF(x)$ for $x\in P$ and $a\in\WW(R)$.
The twist by $u_0$ is necessary 
since $v$ does not stabilise $\WW(R)$ when $p=2$.


Let $\hat R^{\nr}$ be the completion of the strict henselisation of $R$,
let $\tilde K$ be an algebraic closure of its fraction field $\hat K^{\nr}$, 
let $\tilde R\subset\tilde K$ be the integral closure of $\hat R^{\nr}$, 
and let 
\[
\WW(\tilde R)=\varinjlim_E\WW(R_E)
\]
where $E$ runs through the finite extensions of $\hat K^{\nr}$ contained in $\tilde K$ and where $R_E\subset E$ is the integral closure of $\hat R^{\nr}$.
Let $\hat{\tilde R}$ be the $p$-adic completion of $\tilde R$ and
$\hat\WW(\tilde R)$ the $p$-adic completion of $\WW(\tilde R)$.
We define
\[
\hat P_{\tilde R}=\hat\WW(\tilde R)\otimes_{\WW(R)}P
\]
and
\[
\hat Q_{\tilde R}=\Ker(\hat P_{\tilde R}\to\hat{\tilde R}\otimes_RP/Q).
\]
Let $\bar K\subset\tilde K$ 
be the algebraic closure of $K$
and let $\tilde\GGG_K$ be the group
of automorphisms of $\tilde K$ whose restriction to 
$\bar K\hat K^{\nr}$ is induced by an element of $\GGG_K$.
The natural map $\tilde\GGG_K\to\GGG_K$ is surjective,
and bijective when $R$ is one-dimensional since then
$\tilde K=\bar K\hat K^{\nr}$.
The following is the main result of this note; see Proposition \ref{Pr-Tate-perfect}.

\begin{thm}
\label{Th-A}
There is an exact sequence of $\tilde\GGG_K$-modules
\[
0\longrightarrow T_p(G)\longrightarrow\hat Q_{\tilde R}\xrightarrow{F_1-1}
\hat P_{\tilde R}\longrightarrow 0.
\]
\end{thm}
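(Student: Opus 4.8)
The plan is to reduce the assertion to a purely module-theoretic statement over the big ring $\hat\WW(\tilde R)$ which can be checked modulo $p$ and then bootstrapped; this module-theoretic heart is Proposition \ref{Pr-Tate-perfect}. First I would base change along $R\to\hat R^{\nr}$: the Dieudonn\'e display of $G_{\hat R^{\nr}}$ is obtained from $\PPP$ by base change, the rings $R_E$, $\WW(\tilde R)$, $\hat\WW(\tilde R)$, $\hat{\tilde R}$ and the modules $\hat P_{\tilde R}$, $\hat Q_{\tilde R}$ are unchanged, and both $T_p(G)$ and the $\tilde\GGG_K$-action depend only on $G$ over $\hat R^{\nr}$. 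Thus I may assume $R$ is strictly henselian with algebraically closed residue field, so that each $R_E$ is again complete noetherian normal local with algebraically closed residue field and the equivalence between finite flat $p$-group schemes, respectively $p$-divisible groups, over $R_E$ and their Dieudonn\'e displays applies.

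Next I would identify each $G[p^n](\tilde R)$. Since $G[p^n]$ is finite over $R$ and $\tilde R$ is integrally closed in the algebraically closed field $\tilde K$, one has $G[p^n](\tilde R)=G[p^n](\tilde K)=\varinjlim_E\Hom_{R_E}(\ZZ/p^n\ZZ,G)$. The Dieudonn\'e display of $\ZZ/p^n\ZZ$ over $R_E$ is the unit display modulo $p^n$, namely $\WW(R_E)/p^n$ with $Q$ the whole module and $F_1$ normalised by $F_1(1)=1$; hence a morphism of (torsion) Dieudonn\'e displays from it to $\PPP_{R_E}/p^n$ is exactly an element of $Q_{R_E}/p^n$ fixed by $F_1$, where $Q_{R_E}=\Ker(P_{R_E}\to R_E\otimes_RP/Q)$ and $P_{R_E}=\WW(R_E)\otimes_{\WW(R)}P$. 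Passing to $\varinjlim_E$ turns $\WW(R_E)$ into $\WW(\tilde R)$, giving $G[p^n](\tilde R)=\Ker\bigl(F_1-1\colon Q_{\tilde R}/p^nQ_{\tilde R}\to P_{\tilde R}/p^nP_{\tilde R}\bigr)$. Since all the $\WW$-modules involved are $\ZZ_p$-torsion free, the sequence $0\to Q_{\tilde R}\to P_{\tilde R}\to\tilde R\otimes_RP/Q\to0$ stays exact modulo $p^n$, and $p$-adic completion recovers the exact sequence defining $\hat Q_{\tilde R}$ and $\hat P_{\tilde R}$. Multiplication by $p$ on $G$ is surjective on $\tilde R$-points (again using that $\tilde K$ is algebraically closed), so the inverse system $(G[p^n](\tilde R))_n$ has surjective transition maps; therefore the three inverse systems occurring in $0\to G[p^n](\tilde R)\to Q_{\tilde R}/p^n\xrightarrow{F_1-1}C_n\to0$, with $C_n=\mathrm{im}(F_1-1)\subseteq P_{\tilde R}/p^n$, are Mittag--Leffler, and taking $\varprojlim_n$ yields the exact sequence $0\to T_p(G)\to\hat Q_{\tilde R}\xrightarrow{F_1-1}\varprojlim_nC_n\to0$. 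Every map here is $\tilde\GGG_K$-equivariant by functoriality of the Dieudonn\'e display.

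It therefore suffices to prove that $F_1-1\colon Q_{\tilde R}/pQ_{\tilde R}\to P_{\tilde R}/pP_{\tilde R}$ is surjective: by the torsion-freeness just used and an induction that divides successively by $p$, surjectivity modulo $p$ propagates to surjectivity modulo $p^n$ for all $n$, whence $C_n=P_{\tilde R}/p^n$, $\varprojlim_nC_n=\hat P_{\tilde R}$, and the theorem follows. This surjectivity modulo $p$ is the main obstacle and the place where the hypotheses genuinely enter.

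For it I would use the normal decomposition $P=L\oplus T$, $Q=L\oplus I_RT$ of the display, under which $F_1|_L\oplus F|_T$ is an $f$-linear automorphism of $P$. Reducing modulo $p$ and using that the Frobenius of $\tilde R/p$ is surjective --- because every element of $\tilde K$ has a $p$-th root and $\tilde R$ is integrally closed --- one solves $F_1(x)-x=y$ by a Lang/Artin--Schreier--Witt type argument on the $L$-part and a geometric-series argument on the topologically nilpotent $I_RT$-part, matching the two through the filtration of $\PPP$; equivalently, one base changes $\hat\WW(\tilde R)$ to the Witt ring of a perfect ring, where the display becomes an honest Dieudonn\'e module and the statement is the classical description of the Tate module of a $p$-divisible group over a perfect(oid) base, and then descends. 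This last step is exactly the content of Proposition \ref{Pr-Tate-perfect}.
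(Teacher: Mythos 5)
Your reductions in the first three paragraphs are essentially sound: passing to $\hat R^{\nr}$, identifying $G[p^n](\tilde R)$ with the $F_1$-fixed points of $Q_{\tilde R}/p^n$ via the equivalence with (torsion) Dieudonn\'e displays, and the Mittag--Leffler/d\'evissage argument showing that everything follows once $F_1-1\colon Q_{\tilde R}/p\to P_{\tilde R}/p$ is surjective. (Even there you should be more careful: the computation of $\Hom(\ZZ/p^n\ZZ,G)$ as naive morphisms of the quotient displays needs the precise form of the torsion-display equivalence from \cite{Lau-Relation}, since a priori such Hom groups mix $\Hom(\QQ_p/\ZZ_p,G)/p^n$ with an $\Ext^1$-torsion contribution.) But the proof has a fatal structural flaw: the one step you isolate as ``the main obstacle'' --- surjectivity of $F_1-1$ modulo $p$ --- is never proved. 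You close by saying that this step ``is exactly the content of Proposition \ref{Pr-Tate-perfect}''; Proposition \ref{Pr-Tate-perfect} \emph{is} Theorem \ref{Th-A}, restated in the body of the paper, so you are citing the theorem you are supposed to be proving. The argument is circular as written.

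The sketch you offer in place of a proof does not close the gap. A ``geometric-series'' inversion of $F_1-1$ on the $\II_{\tilde R}T$-part requires topological nilpotence of $F_1$ there, which holds on $\hat W$ of a nilpotent ideal but not on all of $\hat\II_{\tilde R}$; the ``Lang argument'' on the $L$-part is an assertion, not an argument, since $\WW(\tilde R)/p$ is not the Witt ring of a perfect ring and the solvability of the twisted Artin--Schreier--Witt equation over it is precisely the point at issue; and ``base change to the Witt ring of a perfect ring \dots and then descend'' cannot work, because surjectivity after an injective base change does not descend. What actually makes the surjectivity true is that $\tilde K$ is algebraically closed, and the only way this enters your setup is through the existence of points on $G[p]$-torsors over the rings $R_E$ --- i.e.\ Lemma \ref{Le-p-surj}. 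To exploit that you must first identify the cokernel of $F_1-1$ with a group of points of $G$, which is exactly what the paper does: it invokes the cohomological construction of the functor $\BT$ in \cite{Lau-Relation} to get the quasi-isomorphism of $G(R_E/\Fm_E^n)$ with the complex $[Q_{E,n}\xrightarrow{F_1-1}P_{E,n}]\otimes[\ZZ\to\ZZ[1/p]]$, and then deduces surjectivity on the completions from the $p$-divisibility of $\hat G(\tilde R)$ via the $\Ext(\QQ_p/\ZZ_p,-)$ sequence (Lemmas \ref{Le-Ext1} and \ref{Le-p-surj}). Your proposal never makes contact with this input, so the heart of the theorem is missing.
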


If $G$ is connected, a similar description of $T_p(G)$ 
in terms of the nilpotent display of $G$ is part of
Zink's theory of displays.
In this case $k$ need not be perfect; 
see \cite[Proposition 4.4]{Messing-Disp}. The
proof is recalled in Proposition \ref{Pr-Tate-connected} below.

\medskip

Assume now in addition that $R$ is a discrete valuation ring.
Then Theorem \ref{Th-A}
can be related with the descriptions of $T_p(G)$
in terms of $p$-adic Hodge theory and
in terms of Breuil-Kisin modules as follows.

\subsubsection*{Relation with the crystalline period homomorphism}

Let $M_{\cris}$ be the value of the covariant
Dieudonn\'e crystal of $G$ over $A_{\cris}(R)$.
It carries a filtration and a Frobenius, 
and by \cite{Faltings-Integral} there is a period homomorphism
\[
T_p(G)\to\Fil M_{\cris}^{F=p}
\] 
which is bijective if $p$ is odd, and injective with
cokernel annihilated by $p$ if $p=2$.
The $v$-stabilised Zink ring $\WW^+(R)=\WW(R)[v(1)]$
induces an extension $\hat\WW^+(\tilde R)$ of the ring 
$\hat\WW(\tilde R)$ defined above; the extension is
trivial if $p$ is odd. 
Since the $v$-stabilised Zink ring carries divided powers,
the universal property of $A_{\cris}$ gives a homomorphism 
\[
\varkappa_{\cris}:A_{\cris}(R)\to\hat\WW^+(\tilde R).
\]
Using the crystalline description of Dieudonn\'e displays of  \cite{Lau-Relation}, 
one obtains an $A_{\cris}$-linear map
\[
M_{\cris}\xrightarrow\tau\hat\WW^+(\tilde R)
\otimes_{\hat\WW(\tilde R)}\hat P_{\tilde R}
\]
compatible with Frobenius and filtration.
We will show that $\tau$ induces
the identity on $T_p(G)$, viewed as a
submodule of $\Fil M_{\cris}$ by the period homomorphism
and as a submodule of $
\hat Q_{\tilde R}\subset\hat P_{\tilde R}$ by Theorem \ref{Th-A};
see Proposition \ref{Pr-commute}.

\subsubsection*{Relation with Breuil-Kisin modules}

Choose a generator $\pi$ of the maximal ideal of $R$.
Let $\FS=W(k)[[t]]$ and let $\sigma:\FS\to\FS$
extend the Frobenius automorphism of $W(k)$ by $t\mapsto t^p$;
see below for the case of more general Frobenius lifts.
We consider pairs $M=(M,\phi)$ where $M$ is a finite $\FS$-module
and where $\phi:M\to M^{(\sigma)}$ is an $\FS$-linear map 
with cokernel annihilated by the kernel of the map
$\FS\to R$ given by $t\mapsto\pi$.
Following \cite{Vasiu-Zink}, $M$ is called a Breuil window 
if $M$ is free over $\FS$, and $M$ is called a Breuil
module if $M$ is a $p$-torsion $\FS$-module of projective 
dimension at most one.

It is known that $p$-divisible groups over $R$ are
equivalent to Breuil windows. This was conjectured
by Breuil \cite{Breuil} and proved by Kisin 
\cite{Kisin-crys,Kisin-2adic} if $p$ is odd, 
and for connected groups if $p=2$.
The general case is proved in \cite{Lau-Relation} by showing 
that Breuil windows are equivalent to Dieudonn\'e displays;
here the ring $R$ can be regular of arbitrary dimension.
For odd $p$ the last equivalence is already proved in
\cite{Vasiu-Zink} for some regular rings, including all
discrete valuation rings.
As a corollary, commutative finite flat $p$-group schemes
over $R$ are equivalent to Breuil modules.
Another proof for $p=2$, related more closely to Kisin's methods,
was obtained independently by W.~Kim \cite{Kim}.

Let $K_\infty$ be the extension of $K$ generated by
a chosen system of successive $p$-th roots of $\pi$.
For a $p$-divisible group $G$ over $R$ let $T(G)$ be its Tate module,
and for a commutative finite flat $p$-group
scheme $G$ over $R$ let $T(G)=G(\bar K)$. 
Kisin's and Kim's results include a description of $T(G)$ 
as a $\GGG_{K_\infty}$-representation in terms of the 
Breuil window or Breuil module $(M,\phi)$ associated to $G$.
In the covariant theory it takes the following form:
\begin{equation}
\label{Eq-second}
T(G)=\{x\in M^{\nr}\mid\phi(x)=1\otimes x\text{ in }
\FS^{\nr}\otimes_{\sigma,\FS^{\nr}}M^{\nr}\}
\end{equation}
Here $M^{\nr}=\FS^{\nr}\otimes_\FS M$, and the ring $\FS^{\nr}$ 
is recalled in Section \ref{Se-FS-nr} below.

We will show how \eqref{Eq-second} 
can be deduced from Theorem \ref{Th-A}.
It suffices to consider the case where $G$ is a $p$-divisible group.
The equivalence between Breuil windows and Dieudonn\'e 
displays over $R$ is induced by a homomorphism 
$\varkappa:\FS\to\WW(R)$. It can be extended to
\[
\varkappa^{\nr}:\FS^{\nr}\to\hat\WW(\tilde R),
\]
which allows to define a map of $\GGG_{K_\infty}$-modules
\[
\{x\in M^{\nr}\mid \phi(x)=1\otimes x\}
\xrightarrow\tau
\{x\in\hat Q_{\tilde R}\mid F_1(x)=x\}.
\]
Since the target is isomorphic to $T(G)$ by Theorem \ref{Th-A},
the proof of \eqref{Eq-second} is reduced to showing that
$\tau$ is bijective; see Proposition \ref{Pr-bijective}.
The verification is easy if $G$ is \'etale; the general 
case follows quite formally using a duality argument.

\medskip

Finally let us recall that the equivalence between
Breuil windows and $p$-divisible groups requires only 
a Frobenius lift $\sigma:\FS\to\FS$ which stabilises the ideal
$t\FS$ such that $p^2$ divides the linear term 
of the power series $\sigma(t)$. 
In this case, if $K_\infty$ denotes the extension of $K$ generated 
by a chosen system of successive $\sigma(t)$-roots of $\pi$,
we obtain an isomorphism \eqref{Eq-second} of 
$\GGG_{K_\infty}$-modules as before;
here the ring $\FS^{\nr}$ depends on $\sigma$ as well.

\medskip

The author thanks Th.~Zink for interesting and helpful discussions.


\numberwithin{lemma}{section}
\numberwithin{equation}{section}

\section{The case of connected $p$-divisible groups}
\label{Se-Connected}

Let $R$ be a complete noetherian normal local ring with residue
field $k$ of characteristic $p\ne 0$, with fraction field $K$ of
characteristic zero, and with maximal ideal $\Fm$.
In this section we recall how the Tate module of a connected 
$p$-divisible group over $R$ is expressed in terms of its
nilpotent display.

Fix an algebraic closure $\bar K$ of $K$ 
and let $\GGG_K=\Gal(\bar K/K)$.
Let $\bar R\subset\bar K$
be the integral closure of $R$ and let $\bar\Fm\subset\bar R$ 
be its maximal ideal.
For a finite extension $E$ of $K$ contained in $\bar K$
let $R_E=\bar R\cap E$,
which is a complete noetherian local ring, and let
$\Fm_E\subset R_E$ be its maximal ideal. We write
\[
\hat W(\Fm_E)=\varprojlim_n\hat W(\Fm_E/\Fm_E^n);
\qquad
\hat W(\bar\Fm)=\varinjlim_E\hat W(\Fm_E).
\]
Let $\bar W(\bar\Fm)$ be the $p$-adic completion of 
$\hat W(\bar\Fm)$
and let $\hat{\bar\Fm}$ be the $p$-adic completion of $\bar\Fm$.
We have a surjection $\bar W(\bar\Fm)\to\hat{\bar\Fm}$.
For a display $\PPP=(P,Q,F,F_1)$ over $R$ let
\[
\bar P_{\bar\Fm}=\bar W(\bar\Fm)\otimes_{W(R)}P;
\qquad
\bar Q_{\bar\Fm}=\Ker(\bar P_{\bar\Fm}\to\hat{\bar\Fm}\otimes_RP/Q).
\]
The functor $\BT$ of \cite{Zink-Disp} induces 
an equivalence of categories between nilpotent displays 
over $R$ and connected $p$-divisible groups over $R$; 
here $\PPP$ is called nilpotent if $\PPP\otimes_Rk$ is
$V$-nilpotent in the usual sense. 
The following is stated in \cite[Proposition~4.4]{Messing-Disp}.

\begin{prop}[Zink]
\label{Pr-Tate-connected}
Let $\PPP$ be a nilpotent display over $R$ and let 
$G=\BT(\PPP)$ be the associated connected $p$-divisible group over $R$.
There is a natural exact sequence of $\GGG_K$-modules
\[
0\longrightarrow T_p(G)\longrightarrow\bar Q_{\bar\Fm}\xrightarrow{F_1-1}
\bar P_{\bar\Fm}\longrightarrow 0.
\]
\end{prop}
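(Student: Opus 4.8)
The plan is to reduce the statement to Zink's explicit construction of the universal extension / logarithm of the formal group $\BT(\PPP)$ over the rings $R_E$, and then pass to the limit over $E$ and $p$-adic completion. First I would recall that for a nilpotent display $\PPP$ over a $p$-adically complete base, Zink's functor $\BT$ produces a formal $p$-divisible group whose points with values in a nilpotent (or $p$-adically complete) $R$-algebra $S$ with an ideal of definition are described by $\widehat{W}$-valued data: concretely, for an admissible (pro-nilpotent) $R$-algebra $N$, one has a functorial identification of the $\mathrm{BT}$-group evaluated on $N$ with a quotient of $\widehat{W}(N)\otimes_{W(R)}P$ by the image of $F_1-1$ on $\widehat{W}(N)\otimes_{W(R)}Q$, together with the exactness statement that $T_p(G)$ — i.e.\ $\varprojlim_n G[p^n]$ evaluated on $\bar R$, realised as $G(\hat{\bar\Fm})$ in the formal sense — sits in the kernel. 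The content of \cite[Prop.~4.4]{Messing-Disp} is precisely the assertion that this picture survives taking the inverse limit over the $\Fm_E^n$, the direct limit over $E$, and the $p$-adic completion.

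The key steps, in order, would be: (1) For a fixed finite $E/K$ and fixed $n$, use the nilpotence of $\PPP$ to apply the theory of \cite{Zink-Disp}: the formal group $G$ over $R_E/\Fm_E^n$ is $\BT(\PPP_{R_E/\Fm_E^n})$, and the exact sequence of the proposition holds with $\widehat{W}(\Fm_E/\Fm_E^n)$ in place of $\bar W(\bar\Fm)$ — this is the base case, essentially Zink's exponential/Artin–Hasse description of the curves on $\BT(\PPP)$. (2) Pass to the inverse limit over $n$; here one must check that $R^1\varprojlim$ vanishes, which follows from surjectivity of the transition maps of $\widehat{W}(\Fm_E/\Fm_E^n)$ and of the finite truncations $G[p^m]$, so the sequence with $\widehat W(\Fm_E)$ stays exact and computes $T_p(G)$ over $R_E$. (3) Pass to the direct limit over $E$: filtered colimits are exact, giving the sequence with $\widehat W(\bar\Fm)$; and $\varinjlim_E T_p(G/R_E)=T_p(G/\bar R)$ because every $p^n$-torsion point is defined over some finite $E$. (4) Take $p$-adic completion: since $T_p(G)$ is already a finite free $\ZZ_p$-module (hence $p$-adically complete and $p$-torsion-free) and the modules $\bar Q_{\bar\Fm}, \bar P_{\bar\Fm}$ are $\ZZ_p$-flat with the sequence being, modulo $T_p(G)$, a short exact sequence $0\to T_p(G)\to \bar Q\to \bar P\to 0$ of $\ZZ_p$-modules, completing at $p$ preserves exactness. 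Finally one checks $\GGG_K$-equivariance: all the rings $\Fm_E$, $\widehat W(\Fm_E)$ and the maps $F, F_1$ carry the evident Galois action (permuting the $E$'s and acting on $\bar R$), and naturality of $\BT$ makes the sequence $\GGG_K$-equivariant.

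The main obstacle I expect is step (1) together with the identification of $G(\hat{\bar\Fm})$ with $T_p(G)$: one has to be careful that the "points of the formal group" computed by the display functor on the nilpotent/pro-nilpotent algebra $\hat{\bar\Fm}$ really recover the Tate module rather than some naive quotient, i.e.\ that the kernel of $F_1-1$ on $\widehat W(\bar\Fm)\otimes Q$ is exactly $\varprojlim_n G[p^n](\bar R)$ and not larger. This is where Zink's lemma that $F_1-1$ (equivalently $V-\mathrm{id}$, after dualising) is "topologically nilpotent enough" on the relevant pro-nilpotent pieces enters: it guarantees both the surjectivity of $F_1-1:\widehat W(\Fm_E/\Fm_E^n)\otimes Q\to \widehat W(\Fm_E/\Fm_E^n)\otimes P$ at finite level and the identification of its kernel with the $p$-adic Tate module. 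Since this is exactly the content cited from \cite[Prop.~4.4]{Messing-Disp} (whose proof in turn rests on \cite{Zink-Disp}), the role of the present proof is to assemble these limits carefully rather than to reprove the base case; accordingly I would present it as a short argument recalling the construction and checking that forming $\varprojlim_n$, $\varinjlim_E$ and $(-)^{\wedge}_p$ commutes with the exact sequence.
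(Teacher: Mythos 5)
There is a genuine gap, and it sits exactly at your step (1) and propagates to step (4). What the definition of $\BT$ provides at finite level (\cite[Thm.~81]{Zink-Disp}) is the exact sequence
\[
0\longrightarrow\hat Q_{E,n}\xrightarrow{F_1-1}\hat P_{E,n}\longrightarrow G(R_E/\Fm_E^n)\longrightarrow 0,
\]
i.e.\ $F_1-1$ is \emph{injective} with cokernel the points of the formal group; it is not surjective with kernel $T_p(G)$. So your base case, ``the exact sequence of the proposition holds with $\hat W(\Fm_E/\Fm_E^n)$ in place of $\bar W(\bar\Fm)$,'' is false, and the claim in your ``main obstacle'' paragraph that topological nilpotence gives surjectivity of $F_1-1$ at finite level has the logic reversed. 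After your (essentially correct) limit steps (2) and (3) one still only has $0\to\hat Q_{\bar\Fm}\xrightarrow{F_1-1}\hat P_{\bar\Fm}\to\hat G(\bar R)\to 0$ with $F_1-1$ injective, so step (4) is circular: you $p$-adically complete a sequence of the shape $0\to T_p(G)\to Q\to P\to 0$ that was never produced.

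The missing mechanism is that kernel and cokernel trade places only under \emph{derived} $p$-adic completion, i.e.\ by applying the $\Hom/\Ext$-sequence of $\QQ_p/\ZZ_p$ against $0\to\hat Q_{\bar\Fm}\to\hat P_{\bar\Fm}\to\hat G(\bar R)\to 0$. This needs three inputs absent from your outline: (a) multiplication by $p$ is surjective on $\hat G(\bar R)$, whence $\Ext^1(\QQ_p/\ZZ_p,\hat G(\bar R))=0$; this is Lemma \ref{Le-p-surj}, proved by trivialising a compatible system of $G[p]$-torsors over a finite extension of $E$, and it is the one place where passing to the integral closure $\bar R$ is essential; (b) $\hat P_{\bar\Fm}$ and $\hat Q_{\bar\Fm}$ have no $p$-torsion (since $\hat W(\bar\Fm)\subset W(\bar K)$ and $\bar\Fm\otimes_RP/Q$ is $p$-torsion free), so on them $\Hom(\QQ_p/\ZZ_p,-)$ vanishes and $\Ext^1(\QQ_p/\ZZ_p,-)$ computes the $p$-adic completion, namely $\bar P_{\bar\Fm}$ resp.\ $\bar Q_{\bar\Fm}$ (Lemma \ref{Le-Ext1}); and (c) $\Hom(\QQ_p/\ZZ_p,\hat G(\bar R))=T_p(G)$ because the torsion subgroups of $\hat G(\bar R)$ and $G(\bar K)$ coincide. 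With these, the long exact sequence collapses to the sequence of the proposition. Your limit bookkeeping and the equivariance remark are fine, but they are not where the content of the proof lies.
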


Here $T_p(G)=\Hom(\QQ_p/\ZZ_p,G(\bar K))$ is the Tate module of $G$.

The proof of Proposition \ref{Pr-Tate-connected}
uses the following well-known facts.

\begin{lemma}
\label{Le-Ext1}
Let $A$ be an abelian group.
\begin{enumerate}
\renewcommand{\theenumi}{\roman{enumi}}
\item 
If $A$ has no $p$-torsion then 
$\Ext^1(\QQ_p/\ZZ_p,A)=\varprojlim A/p^nA$.
\item
If $pA=A$ then $\Ext^1(\QQ_p/\ZZ_p,A)$ is zero.
\end{enumerate}
\end{lemma}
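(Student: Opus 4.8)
The plan is to identify $\Ext^1(\QQ_p/\ZZ_p,A)$ using the projective resolution of $\QQ_p/\ZZ_p$ coming from the short exact sequence $0\to\ZZ_p\to\QQ_p\to\QQ_p/\ZZ_p\to 0$, or more concretely the presentation $0\to\ZZ_p\xrightarrow{\cdot 1}\ZZ_p^{(\NN)}\to\cdots$; but the cleanest route is to use $\QQ_p/\ZZ_p=\varinjlim_n\ZZ/p^n\ZZ$ together with the $\varprojlim^1$ exact sequence. First I would write $\Ext^i(\QQ_p/\ZZ_p,A)=\Ext^i(\varinjlim_n\ZZ/p^n\ZZ,A)$ and invoke the Milnor exact sequence
\[
0\to{\varprojlim_n}^1\Ext^{i-1}(\ZZ/p^n\ZZ,A)\to\Ext^i(\QQ_p/\ZZ_p,A)\to\varprojlim_n\Ext^i(\ZZ/p^n\ZZ,A)\to 0,
\]
the transition maps in the inverse systems being induced by the projections $\ZZ/p^{n+1}\ZZ\to\ZZ/p^n\ZZ$ on the first variable, which on $\Ext$ correspond to multiplication by $p$ after the obvious identifications. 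From the free resolution $0\to\ZZ\xrightarrow{p^n}\ZZ\to\ZZ/p^n\ZZ\to 0$ one reads off $\Hom(\ZZ/p^n\ZZ,A)=A[p^n]$ and $\Ext^1(\ZZ/p^n\ZZ,A)=A/p^nA$, with the transition map $A/p^{n+1}A\to A/p^nA$ being the natural projection (for (i)) and $A[p^n]\to A[p^{n+1}]$ being multiplication by $p$ on torsion (relevant for the $\varprojlim^1$ term).

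For part (i), the hypothesis that $A$ has no $p$-torsion gives $\Hom(\ZZ/p^n\ZZ,A)=A[p^n]=0$ for all $n$, so the $\varprojlim^1$ term in the Milnor sequence (with $i=1$) vanishes, and we are left with $\Ext^1(\QQ_p/\ZZ_p,A)\cong\varprojlim_n\Ext^1(\ZZ/p^n\ZZ,A)=\varprojlim_n A/p^nA$, which is the claimed formula; one should check that the transition maps are indeed the evident projections, which follows from functoriality of the resolutions. For part (ii), the hypothesis $pA=A$ means multiplication by $p$, hence by $p^n$, is surjective on $A$, so $A/p^nA=0=\Ext^1(\ZZ/p^n\ZZ,A)$; thus the right-hand term of the Milnor sequence is zero. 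It remains to kill the ${\varprojlim_n}^1\Hom(\ZZ/p^n\ZZ,A)={\varprojlim_n}^1 A[p^n]$ term. Here the transition maps $A[p^n]\to A[p^{n+1}]$ are multiplication by $p$; since $p:A\to A$ is surjective it restricts to a surjection $A[p^{n+1}]\to A[p^n]$ (if $pa\in A[p^n]$ then $a\in A[p^{n+1}]$ and conversely any element of $A[p^n]$ is $p$ times something $p$-power torsion), so the inverse system $(A[p^n])$ has surjective transition maps and hence satisfies the Mittag-Leffler condition, forcing its $\varprojlim^1$ to vanish.

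The main obstacle — really the only point needing care — is the bookkeeping of which maps appear in the inverse systems: one must be sure that the transition maps on $\Ext^1$ are the natural projections $A/p^{n+1}A\to A/p^nA$ and not, say, multiplication by $p$, and symmetrically that the maps on $\Hom=A[p^n]$ are multiplication by $p$; getting these backwards would break the Mittag-Leffler argument in (ii). This is settled by chasing the commutative ladder of free resolutions covering $\ZZ/p^{n+1}\ZZ\twoheadrightarrow\ZZ/p^n\ZZ$: the lift of this projection to the length-one resolutions is the identity in degree $0$ and multiplication by $p$ in degree $1$, which after applying $\Hom(-,A)$ gives the asserted descriptions. Alternatively, and perhaps more transparently for the write-up, one can bypass Milnor's sequence for (i) by noting that $\ZZ_p$ is a flat resolution piece: the sequence $0\to\ZZ_p\to\QQ_p\to\QQ_p/\ZZ_p\to 0$ gives the four-term exact sequence $\Hom(\QQ_p,A)\to\Hom(\ZZ_p,A)\to\Ext^1(\QQ_p/\ZZ_p,A)\to\Ext^1(\QQ_p,A)$, but since $\QQ_p$ is not free over $\ZZ$ this is less immediate, so I will keep the $\varinjlim$/Milnor approach as the primary argument, using it uniformly for both parts.
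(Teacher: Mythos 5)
Your overall route is the same as the paper's: write $\QQ_p/\ZZ_p=\varinjlim_n\ZZ/p^n\ZZ$, apply the Milnor sequence $0\to{\varprojlim}^1 A[p^n]\to\Ext^1(\QQ_p/\ZZ_p,A)\to\varprojlim A/p^nA\to 0$, and dispose of the two parts by noting $A[p^n]=0$ in (i) and $A/p^nA=0$ plus Mittag--Leffler in (ii). The conclusions you extract --- that the inverse system $(A/p^nA)_n$ has the natural projections as transition maps, and that $(A[p^n])_n$ has multiplication by $p$ as transition maps $A[p^{n+1}]\to A[p^n]$, which are surjective when $pA=A$ --- are the correct ones, and with them both parts follow exactly as you say.

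However, the step you yourself single out as ``the only point needing care'' is the one you get wrong. The colimit $\QQ_p/\ZZ_p=\varinjlim_n\ZZ/p^n\ZZ$ is taken along the \emph{injections} $p:\ZZ/p^n\ZZ\to\ZZ/p^{n+1}\ZZ$ (identifying $\ZZ/p^n\ZZ$ with the $p^n$-torsion of $\QQ_p/\ZZ_p$), not along the projections $\ZZ/p^{n+1}\ZZ\to\ZZ/p^n\ZZ$; the projections are not the structure maps of this colimit, and by contravariance they would induce maps $A/p^nA\to A/p^{n+1}A$, i.e.\ going the wrong way to feed into an inverse limit. Concretely, the lift of the projection that you describe (identity in degree $0$, multiplication by $p$ in degree $1$) induces, after applying $\Hom(-,A)$, the inclusion $A[p^n]\hookrightarrow A[p^{n+1}]$ on $\Hom$ and multiplication by $p$ as a map $A/p^nA\to A/p^{n+1}A$ on $\Ext^1$ --- not the ``asserted descriptions.'' The chase you should do is for the injection $p:\ZZ/p^n\ZZ\to\ZZ/p^{n+1}\ZZ$, whose lift to the resolutions is multiplication by $p$ in degree $0$ and the identity in degree $1$; this yields multiplication by $p$ as the transition map $A[p^{n+1}]\to A[p^n]$ and the natural projection $A/p^{n+1}A\to A/p^nA$, which is exactly what the remainder of your argument (and the paper's proof) actually uses. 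With this correction the proof is complete and coincides with the paper's.
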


\begin{proof}
The group $\Hom(\QQ_p/\ZZ_p,A)$ is isomorphic to
$\varprojlim\Hom(\ZZ/p^n\ZZ,A)$ with transition maps induced
by $p:\ZZ/p^n\ZZ\to\ZZ/p^{n+1}\ZZ$. The corresponding system
$\Ext^1(\ZZ/p^n\ZZ,A)$ is isomorphic to $A/p^nA$ with
transition maps induced by $\id_A$. Thus there is an
exact sequence 
\[
0\to{\varprojlim}^1A[p^n]\to\Ext^1(\QQ_p/\ZZ_p,A)
\to\varprojlim A/p^nA\to 0.
\]
Both assertions of the lemma follow easily.
\end{proof}

For a $p$-divisible group $G$ over $R$ and 
for $E$ as above we write
\[
\hat G(R_E)=\varprojlim_n G(R_E/\Fm_E^n);
\qquad
\hat G(\bar R)=\varinjlim_E\hat G(R_E).
\]

\begin{lemma}
\label{Le-p-surj}
Multiplication by $p$ is surjective on $\hat G(\bar R)$.
\end{lemma}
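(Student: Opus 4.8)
The plan is to solve the equation $p\,y=x$ for $x\in\hat G(\bar R)$ after passing to a finite extension, by interpreting ``$p$-th roots of $x$'' as the points of an explicit finite free scheme over $R_E$ and then using that a nonzero finite algebra over the field $E$ acquires a point over a finite extension contained in $\bar K$. Let $h$ be the height of $G$; if $h=0$ then $G=0$ and there is nothing to prove, so I assume $h\ge 1$. Fix $x\in\hat G(\bar R)$; by definition it is represented by a compatible system $(x_n)_{n\ge 1}$, $x_n\in G(R_E/\Fm_E^n)$, for some finite extension $E$ of $K$ in $\bar K$. It then suffices to produce a finite extension $E'$ of $E$ in $\bar K$ and an element $y\in\hat G(R_{E'})$ whose image in $\hat G(\bar R)$ satisfies $p\,y=x$.

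The structural input I would use is that, $G$ being a $p$-divisible group, the map $p\colon G[p^{\nu+1}]\to G[p^{\nu}]$ is finite locally free of degree $p^{h}$ for every $\nu$. Thus for each $n$, choosing $\nu$ with $x_n\in G[p^{\nu}](R_E/\Fm_E^n)$, the fibre
\[
T_n=G[p^{\nu+1}]\times_{G[p^{\nu}],\,x_n}\Spec(R_E/\Fm_E^{n})
\]
is the spectrum of a free $R_E/\Fm_E^{n}$-module $B_n$ of rank $p^{h}$, which is independent of the chosen $\nu$ and represents the functor of $p$-th roots of $x_n$ in $G$. Since forming a fibre commutes with base change and the $x_n$ are compatible, the transition maps identify $B_{n+1}\otimes_{R_E/\Fm_E^{n+1}}R_E/\Fm_E^{n}$ with $B_n$, so $\hat B:=\varprojlim_n B_n$ is a free $R_E$-module of rank $p^{h}$; in particular $\hat B\ne 0$.

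Now $\hat B\otimes_{R_E}E$ is a nonzero finite $E$-algebra, hence has a residue field $E'$ that is a finite extension of $E$, which I may take inside $\bar K$. The induced ring homomorphism $\hat B\to E'$ is $R_E$-linear, so the image of $\hat B$ is integral over $R_E$ and therefore contained in $R_{E'}=\bar R\cap E'$. This gives an $R_E$-homomorphism $\hat B\to R_{E'}$; reducing it modulo $\Fm_{E'}^{n}$, and using that the local inclusion $R_E\subset R_{E'}$ carries $\Fm_E^{n}$ into $\Fm_{E'}^{n}$, one obtains a compatible system of points $y_n\in T_n(R_{E'}/\Fm_{E'}^{n})$, i.e.\ elements $y_n\in G(R_{E'}/\Fm_{E'}^{n})$ with $p\,y_n=x_n$. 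Hence $y:=(y_n)\in\hat G(R_{E'})$ satisfies $p\,y=x$ in $\hat G(R_{E'})$, and a fortiori in $\hat G(\bar R)$. The only non-formal step is the structural one above: it is what guarantees that the $B_n$ form an inverse system of finite free modules of constant positive rank, hence that $\hat B$ is finite free and nonzero over $R_E$; the rest is the elementary fact that a nonzero finite algebra over a field has a residue field, together with integrality forcing the relevant image into $R_{E'}$.
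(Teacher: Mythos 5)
Your argument is correct and follows essentially the same route as the paper: the paper views the fibres of $p$ over $x$ as a compatible system of $G[p]$-torsors $Y_n$ over $R_E/\Fm_E^n$, assembles them into a $G[p]$-torsor $Y$ over $R_E$, and finds a point of $Y$ over a finite extension $F$ with $Y(F)=Y(R_F)$. You merely make explicit what the torsor language packages (finite freeness of rank $p^h$, the passage to the limit, and the integrality argument placing the $E'$-point in $R_{E'}$), so the two proofs coincide in substance.
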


\begin{proof}
Let $x\in\hat G(R_E)$ be given.
The inverse image of $x$ under multiplication by $p$ is a
compatible system of $G[p]$-torsors $Y_n$ over
$R_E/\Fm_E^n$. They define a $G[p]$-torsor $Y$
over $R_E$. For some finite extension $F$ of
$E$ the set $Y(F)=Y(R_F)$ is non-empty, and
$x$ becomes divisible by $p$ in $\hat G(R_F)$.
\end{proof}

\begin{proof}[Proof of Proposition \ref{Pr-Tate-connected}]
Let $E$ be a finite Galois extension of $K$
in $\bar K$. Let
\[
\hat P_{E,n}=\hat W(\Fm_E/\Fm_E^n)\otimes_{W(R)}P;
\quad
\hat Q_{E,n}=\Ker(\hat P_{E,n}\to\Fm_E/\Fm_E^n\otimes_R P/Q).
\]
Recall that $P$ is a finite free $W(R)$-module, 
and $P/Q$ is a finite free $R$-module. 
The definition of the functor $\BT$ in \cite[Thm.~81]{Zink-Disp}
gives an exact sequence
of $\GGG_K$-modules
\begin{equation*}
0\longrightarrow\hat Q_{E,n}\xrightarrow{F_1-1}
\hat P_{E,n}\longrightarrow G(R_E/\Fm_E^n)\longrightarrow 0.
\end{equation*}
Since the modules $\hat Q_{E,n}$ form a surjective system 
with respect to $n$, applying $\varinjlim_E\varprojlim_n$
gives an exact sequence of $\GGG_K$-modules
\begin{equation}
\label{Eq-BT}
0\longrightarrow\hat Q_{\bar\Fm}\xrightarrow{F_1-1}
\hat P_{\bar\Fm}\longrightarrow\hat G(\bar R)\longrightarrow 0
\end{equation}
with $\hat P_{\bar\Fm}=\hat W(\bar\Fm)\otimes_{W(R)}P$ and
$\hat Q_{\bar\Fm}=\Ker(\hat P_{\bar\Fm}\to\bar\Fm\otimes_RP/Q)$.
The $p$-adic completions of $\hat P_{\bar\Fm}$ and
$\hat Q_{\bar\Fm}$ are $\bar P_{\bar \Fm}$ and $\bar Q_{\bar\Fm}$;
here we use that $\bar\Fm\otimes_RP/Q$ has no $p$-torsion.
Moreover $\hat P_{\bar\Fm}$ has no $p$-torsion
since $\hat W(\bar\Fm)$ is contained in the
$\QQ$-algebra $W(\bar K)$.
Using Lemmas \ref{Le-p-surj} and \ref{Le-Ext1}, the
$\Ext$-sequence of $\QQ_p/\ZZ_p$ with \eqref{Eq-BT} 
reduces to the short exact sequence
\[
0\longrightarrow\Hom(\QQ_p/\ZZ_p,\hat G(\bar R))\longrightarrow\bar Q_{\bar\Fm}
\xrightarrow{F_1-1}\bar P_{\bar\Fm}\longrightarrow 0.
\]
The proposition follows since the $p^n$-torsion of
$\hat G(\bar R)$ and $G(\bar K)$ coincide.
\end{proof}


\section{Some frame formalism}
\label{Se-frames}

Before we proceed we introduce a formal definition.
Let $\FFF=(S,R,I,\sigma,\sigma_1)$ be a frame in the 
sense of \cite{Lau-Frames} such that $S$ is a $\ZZ_p$-algebra
and $\sigma$ is $\ZZ_p$-linear.
For an $\FFF$-window $\PPP=(P,Q,F,F_1)$ we consider the 
\emph{module of invariants}
\[
T(\PPP)=\{x\in Q\mid F_1(x)=x\};
\]
this is a $\ZZ_p$-module. Let us list some of its formal properties.

Functoriality in $\FFF$:
Let $\alpha:\FFF\to\FFF'=(S',I',R',\sigma',\sigma_1')$ 
be a $u$-homomor\-phism of frames, thus $u\in S'$ is a unit,
and we have $\sigma_1'\alpha=u\cdot\alpha\sigma_1$ on $I$.
Assume that a unit $c\in S'$ with $c\sigma'(c)^{-1}=u$
is given. For an $\FFF$-window $\PPP$ as above,
the $S$-linear map $P\to S'\otimes_SP$,
$x\mapsto c\otimes x$ induces a $\ZZ_p$-linear map
\[
\tau(\PPP)=\tau_{c}(\PPP):T(\PPP)\to T(\alpha_*\PPP).
\]

Duality:
Recall that a bilinear form of $\FFF$-windows
$\gamma:\PPP\times\PPP'\to\PPP''$ is an $S$-bilinear map
$\gamma:P\times P'\to P''$ with $Q\times Q'\to Q''$ such
that for $x\in Q$ and $x'\in Q'$ we have 
$\gamma(F_1x,F_1'x')=F_1''(\gamma(x,x'))$. 
It induces a bilinear map of $\ZZ_p$-modules 
$T(\PPP)\times T(\PPP')\to T(\PPP'')$.
Let us denote the $\FFF$-window $(S,I,\sigma,\sigma_1)$
by $\FFF$ again. 
For each $\FFF$-window $\PPP$ there is a well-defined dual 
$\FFF$-window $\PPP^t$ together with a perfect bilinear form 
$\PPP\times\PPP^t\to\FFF$. It gives a bilinear map 
$T(\PPP)\times T(\PPP^t)\to T(\FFF)$. In our applications,
$T(\FFF)$ will be free of rank one, and the bilinear map
will turn out to be perfect.

Functoriality of duality:
For a $u$-homomorphism of frames $\alpha:\FFF\to\FFF'$ 
with $c$ as above and for a
bilinear form of $\FFF$-windows $\gamma:\PPP\times\PPP'\to\PPP''$,
the base change of $\gamma$ multiplied by $c^{-1}$ is a 
bilinear form of $\FFF'$-windows 
$\alpha_*\PPP\times\alpha_*\PPP'\to\alpha_*\PPP''$,
which we denote by $\alpha_*(\gamma)$;
see \cite[Lemma 2.14]{Lau-Frames}.
By passing to the modules of invariants
we obtain a commutative diagram
\[
\xymatrix@M+0.2em@C+1em{
T(\PPP)\times T(\PPP') \ar[r]^-{\gamma} 
\ar[d]_{\tau(\PPP)\times\tau(\PPP')} &
T(\PPP'') \ar[d]^{\tau(\PPP'')} \\
T(\alpha_*\PPP)\times T(\alpha_*\PPP') 
\ar[r]^-{\alpha_*(\gamma)} &
T(\alpha_*\PPP'').
}
\]
This will be applied to the bilinear form
$\PPP\times\PPP^t\to\FFF$.


\section{The case of perfect residue fields}
\label{Se-perfect}

Let $R,K,k,\Fm$ be as in Section \ref{Se-Connected}.
Assume that the residue field $k$ is perfect.
As in \cite[Sections 2.3 and 2.7]{Lau-Relation} we consider
the frame
\[
\DDD_R
=\varprojlim_n\DDD_{R/\Fm^n}
=(\WW(R),\II_R,R,f,\ff_1).
\]
Windows over $\DDD_R$, called Dieudonn\'e displays over $R$,
are equivalent to $p$-divisible groups $G$ over $R$ by 
\cite{Zink-DDisp} if $p$ is odd and by
\cite[Theorem A]{Lau-Relation} in general. 
The Tate module $T_p(G)$ can be expressed in terms of the 
associated Dieudonn\'e display by a variant of 
Proposition \ref{Pr-Tate-connected} as follows.

Let $R^{\nr}$ be the strict henselisation of $R$.
This is an excellent normal domain by
\cite{Greco-Excellent} or \cite{Seydi-Weierstrass},
so its completion $\hat R^{\nr}$ is a normal domain again.
Let $K^{\nr}\subset\hat K^{\nr}$ be the fraction fields
of $R^{\nr}\subset\hat R^{\nr}$,
let $\tilde K$ be an algebraic closure of $\hat K^{\nr}$,
and let $\tilde R\subset\tilde K$ be the integral closure
of $\hat R^{\nr}$. We define a frame
\[
\DDD_{\tilde R}=\varinjlim_E\varprojlim_n\DDD_{R_E/\Fm_E^n}
=(\WW(\tilde R),\II_{\tilde R},\tilde R,f,\ff_1)
\]
where $E$ runs through the finite extensions of $\hat K^{\nr}$
in $\tilde K$ and where $R_E\subset E$ is the integral closure
of $\hat R^{\nr}$.
Since $\tilde R$ has no $p$-torsion, the componentwise
$p$-adic completion of $\DDD_{\tilde R}$ is a frame again, 
which we denote by
\[
\hat\DDD_{\tilde R}=
(\hat\WW(\tilde R),\hat\II_{\tilde R},\hat{\tilde R},f,\ff_1).
\]

Let $\bar K\subset\tilde K$ be the algebraic closure of $K$
and let $\GGG_K=\Gal(\bar K/K)$.
The tensor product $\bar K\otimes_{K^{\nr}}\hat K^{\nr}$ is a 
subfield of $\tilde K$; see \cite[Chapitre IX, Th\'eor\`eme 3]{Raynaud-Henseliens}, 
with equality if $R$ is one-dimensional, since
in any case the etale coverings of the 
complements of the maximal ideals 
in $\Spec R^{\nr}$ and $\Spec\hat R^{\nr}$ coincide by 
\cite[Th\'eor\`eme 5]{Elkik} or by \cite[II 2.1]{Artin-Etale-Coverings}.
Let $\tilde\GGG_K$ be the group of automorphisms of $\tilde K$
whose restriction to $\bar K\hat K^{\nr}$ is induced by an
element of $\GGG_K$. This group acts naturally on 
$\DDD_{\tilde R}$ and on $\hat\DDD_{\tilde R}$.
The projection $\tilde\GGG_K\to\GGG_K$ is surjective, 
and bijective if $R$ is one-dimensional.

\begin{prop}
\label{Pr-Tate-perfect}
Let  $G$ be a $p$-divisible group over $R$ and let $\PPP=\Phi_R(G)$ 
be the Dieudonn\'e display over $R$ associated to $G$ in \cite{Lau-Relation}.
Let $\hat\PPP_{\!\tilde R}=(\hat P_{\tilde R},\hat Q_{\tilde R},F,F_1)$ 
be the base change of $\PPP$ to $\hat\DDD_{\tilde R}$.
There is a natural exact sequence of $\tilde\GGG_K$-modules
\[
0\longrightarrow T_p(G)\longrightarrow\hat Q_{\tilde R}\xrightarrow{F_1-1}
\hat P_{\tilde R}\longrightarrow 0.
\]
\end{prop}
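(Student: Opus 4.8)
The plan is to run the proof of Proposition~\ref{Pr-Tate-connected} with Dieudonn\'e displays over $\WW(R)$ in place of nilpotent displays over $W(R)$, and with $\tilde R$ in place of the integral closure $\bar R$ of $R$ in $\bar K$. The essential new feature compared with Proposition~\ref{Pr-Tate-connected} is that $G$ need not be connected, so the \'etale direction has to be dealt with.

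The first step is an exact sequence over artinian bases. For a finite extension $E$ of $\hat K^{\nr}$ in $\tilde K$ and $n\ge1$ write $\hat P_{E,n}=\WW(R_E/\Fm_E^n)\otimes_{\WW(R)}P$ and $\hat Q_{E,n}=\Ker(\hat P_{E,n}\to(R_E/\Fm_E^n)\otimes_RP/Q)$. I would first produce the exact sequence
\[
0\longrightarrow\hat Q_{E,n}\xrightarrow{\,F_1-1\,}\hat P_{E,n}\longrightarrow G(R_E/\Fm_E^n)\longrightarrow 0 .
\]
Since $R_E/\Fm_E^n$ is artinian with perfect residue field $\bar k$, this is the exact sequence underlying the construction of the functor from Dieudonn\'e displays to $p$-divisible groups in \cite{Zink-DDisp} and \cite{Lau-Relation}. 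Alternatively one reduces to the connected-\'etale sequence $0\to G^0\to G\to G^{\text{\'{e}t}}\to 0$, which yields a short exact sequence of Dieudonn\'e displays $0\to\PPP^0\to\PPP\to\PPP^{\text{\'{e}t}}\to 0$: for $\PPP^0$ one invokes \cite[Thm.~81]{Zink-Disp} as in Proposition~\ref{Pr-Tate-connected}, after comparing the Dieudonn\'e display with its associated $W$-display, while for $\PPP^{\text{\'{e}t}}$, where $Q=P$, $F_1$ is $f$-linear bijective and $G^{\text{\'{e}t}}$ becomes constant over $\hat R^{\nr}$, the sequence is elementary. The sequence is functorial in $R_E/\Fm_E^n$, and all the constructions commute with automorphisms of $\tilde K$ over $K$, so the $\tilde\GGG_K$-action will be carried along.

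The second step is the passage to the limit together with the $\Ext$-computation. Since $P$ is $\WW(R)$-free and the rings $\WW(R_E/\Fm_E^n)$ form surjective systems in $n$, the $\hat Q_{E,n}$ do too, and $\varinjlim_E\varprojlim_n$ gives an exact sequence of $\tilde\GGG_K$-modules
\[
0\longrightarrow\hat Q'_{\tilde R}\xrightarrow{\,F_1-1\,}\hat P'_{\tilde R}\longrightarrow\hat G(\tilde R)\longrightarrow 0 ,
\]
with $\hat P'_{\tilde R}=\WW(\tilde R)\otimes_{\WW(R)}P$, $\hat Q'_{\tilde R}=\Ker(\hat P'_{\tilde R}\to\tilde R\otimes_RP/Q)$ and $\hat G(\tilde R)=\varinjlim_E\varprojlim_nG(R_E/\Fm_E^n)$. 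Now I would repeat the argument of Proposition~\ref{Pr-Tate-connected} verbatim: multiplication by $p$ is surjective on $\hat G(\tilde R)$, and here already on each $\hat G(R_E)$, by the argument of Lemma~\ref{Le-p-surj}, which is even easier since $R_E$ is complete henselian with algebraically closed residue field $\bar k$; the module $\hat P'_{\tilde R}$ has no $p$-torsion, being contained in $W(\tilde R)$ with $\tilde R$ $p$-torsion-free, and its $p$-adic completion is $\hat P_{\tilde R}$, and likewise for $\hat Q'_{\tilde R}$ because $\tilde R\otimes_RP/Q$ is $\tilde R$-free. Applying $\Hom(\QQ_p/\ZZ_p,-)$ and Lemma~\ref{Le-Ext1} turns the $\Ext$-sequence into
\[
0\longrightarrow\Hom(\QQ_p/\ZZ_p,\hat G(\tilde R))\longrightarrow\hat Q_{\tilde R}\xrightarrow{\,F_1-1\,}\hat P_{\tilde R}\longrightarrow 0 ,
\]
and one identifies $\Hom(\QQ_p/\ZZ_p,\hat G(\tilde R))=\varprojlim_nG[p^n](\tilde R)=\varprojlim_nG[p^n](\tilde K)=T_p(G)$, using that each $G[p^n]$ is finite over a suitable $R_E$ and that $\tilde R$ is integrally closed in $\tilde K$.

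I expect the main obstacle to be the first step: making precise the $\BT$-type exact sequence over artinian rings with perfect residue field for Dieudonn\'e displays of not necessarily connected $p$-divisible groups, that is, controlling the \'etale direction. The delicate point there is surjectivity of $F_1-1$ on the \'etale piece of $\hat P_{\tilde R}$: this rests on $1-f$ being invertible on the pro-infinitesimal part (where $f$ is topologically nilpotent, with inverse $\sum_{n\ge0}f^n$), together with triviality of \'etale $\phi$-modules over $W(\bar k)$, which in turn reflects the fact that $\tilde R$, the integral closure of the strictly henselian ring $\hat R^{\nr}$, absorbs all the relevant \'etale extensions. The bookkeeping of the $\tilde\GGG_K$-action through the limit is routine but has to be checked at each stage.
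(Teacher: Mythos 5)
There is a genuine gap, and it sits exactly where you place your first step. The short exact sequence
\[
0\longrightarrow\hat Q_{E,n}\xrightarrow{F_1-1}\hat P_{E,n}\longrightarrow G(R_E/\Fm_E^n)\longrightarrow 0
\]
is false for non-connected $G$, and so is its limit version with $\hat G(\tilde R)$ as cokernel. Take $G=\QQ_p/\ZZ_p$, so $P=Q=\WW(R_{E,n})$ and $F_1=f$. Since $R_{E,n}$ has residue field $\bar k$ and $\WW(R_{E,n})=W(\bar k)\oplus\hat W(\Fm_E/\Fm_E^n)$, the map $f-1$ is surjective on $W(\bar k)$ with kernel $\ZZ_p$ (Artin--Schreier--Witt over an algebraically closed field), and $f$ is topologically nilpotent on $\hat W(\Fm_E/\Fm_E^n)$, so $F_1-1:\hat P_{E,n}\to\hat P_{E,n}$ is surjective with kernel $\ZZ_p$; but $G(R_{E,n})=\QQ_p/\ZZ_p$. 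Thus $F_1-1$ fails to be injective and its cokernel is $0$, not $G(R_{E,n})$: the sequence is wrong at both ends. Your ``alternative'' via the connected--\'etale d\'evissage relies on the same claim for $\PPP^{\text{\'et}}$ (``the sequence is elementary''), so it does not repair this. Your closing paragraph shows you sense the difficulty, but you misdiagnose it as a surjectivity problem; the real point is that for the \'etale part the Tate module appears as a \emph{kernel} of $F_1-1$ already at finite level, while the divisible torsion group $G(R_{E,n})$ cannot be its cokernel, so the whole template ``short exact sequence with $G$ as cokernel, then apply $\Ext^*(\QQ_p/\ZZ_p,-)$'' collapses.

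What the paper does instead: over the Artin rings $R_{E,n}$ the functor $\BT$ of \cite{Lau-Relation} exhibits $G(R_{E,n})$ not as a cokernel but as quasi-isomorphic to the three-term complex $C_{E,n}=[Q_{E,n}\xrightarrow{F_1-1}P_{E,n}]\otimes[\ZZ\to\ZZ[1/p]]$ in degrees $-1,0,1$; the extra factor $[\ZZ\to\ZZ[1/p]]$ is exactly what accounts for the \'etale direction. Passing to the limit complex $C\simeq\hat G(\tilde R)$ and computing $R\Hom(\QQ_p/\ZZ_p,C)$, the absence of $p$-torsion in the components of $C$ reduces everything to componentwise $\Ext^1(\QQ_p/\ZZ_p,-)$, i.e.\ componentwise $p$-adic completion, which kills the $[1/p]$-terms and leaves $[\hat Q_{\tilde R}\xrightarrow{F_1-1}\hat P_{\tilde R}]$, while the other side is $T_p(G)$ in degree $-1$. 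So your step 2 (limits, torsion-freeness, $p$-adic completion, Lemma \ref{Le-Ext1}) survives in derived form, but the input it needs is the quasi-isomorphism with $C_{E,n}$, not a short exact sequence. A connected--\'etale splice could in principle be made to work, but then the \'etale case must be handled with $T_p$ as the kernel of $F_1-1$ (trivializing the \'etale $\phi$-module over $W(\bar k)$ and inverting $1-F_1$ on the pro-infinitesimal part), followed by a snake-lemma argument identifying the resulting extension of kernels with $T_p(G)$ --- none of which is carried out in your write-up.
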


\noindent
In particular we have an isomorphism of\/ $\GGG_K$-modules
\[
\per:T_p(G)\xrightarrow\sim T(\hat\PPP_{\!\tilde R})
\]
which we call the period isomorphism is display theory.

\begin{proof}[Proof of Proposition \ref{Pr-Tate-perfect}]
For a $p$-divisible group $G$ over $R$ and for finite 
extensions $E$ of $\hat K^{\nr}$ in $\tilde K$ we set
\[
\hat G(R_E)=\varprojlim_nG(R_E/\Fm_E^n);
\qquad
\hat G(\tilde R)=\varinjlim_E\hat G(R_E).
\]
Multiplication by $p$ is surjective on 
$\hat G(\tilde R)$ by Lemma \ref{Le-p-surj}
applied over $\hat R^{\nr}$.
The group $\tilde\GGG_K$ acts on the system $\hat G(R_E)$ 
for varying $E$ and thus on $\hat G(\tilde R)$.
The rings $R_{E,n}=R_E/\Fm_E^n$ are local Artin rings
with residue field $\bar k$. Thus $R_{E,n}$
lies in the category $\JJJ_{R/\Fm^n}$ 
defined in \cite[Section~9]{Lau-Relation}. 
Let $\PPP_{E,n}=(P_{E,n},Q_{E,n},F,F_1)$
be the base change of $\PPP$ to $R_{E,n}$.
Since every ind-\'etale covering of $\Spec R_{E,n}$
has a section, the definition of the functor
$\BT$ in \cite[Proposition~9.4]{Lau-Relation}
as an ind-\'etale cohomology sheaf together with \cite[Proposition 9.7]{Lau-Relation}
shows that $G(R_{E,n})=\BT(\PPP_{E,n})(R_{E,n})$ is quasi-isomorphic 
to the complex of $\tilde\GGG_K$-modules in degrees $-1,0,1$
\[
C_{E,n}=[Q_{E,n}\xrightarrow{F_1-1}P_{E,n}]
\otimes[\ZZ\to\ZZ[1/p]].
\]
Let
\[
C_E=\varprojlim_nC_{E,n}; \qquad
C=\varinjlim_EC_E;
\]
where $E$ runs through the finite extensions of $K^{\nr}$ in $\bar K$.
Since $G(R_{E,n})$ and the components of $C_{E,n}$ form
surjective systems with respect to $n$, the complex $C$
is quasi-isomorphic to $\hat G(\tilde R)$.
We will verify the following chain of isomorphisms 
(denoted $\cong$) and quasi-isomorphisms (denoted $\simeq$)
of complexes of $\tilde\GGG_K$-modules, which
will prove the proposition. 
Here $\Ext^1$ is taken component-wise in the second argument.
\begin{multline*}
T_p(G)\overset{(1)}\cong\Hom(\QQ_p/\ZZ_p,\hat G(\tilde R))
\overset{(2)}\simeq R\Hom(\QQ_p/\ZZ_p,\hat G(\tilde R)) \\
\overset{(3)}\simeq R\Hom(\QQ_p/\ZZ_p,C)
\overset{(4)}\simeq\Ext^1(\QQ_p/\ZZ_p,C[-1])
\overset{(5)}\cong[\hat Q_{\tilde R}
\xrightarrow{F_1-1}\hat P_{\tilde R}]
\end{multline*}
The torsion subgroups of $G(\bar K)$ and of $\hat G(\tilde R)$ coincide, 
which proves (1). For (2) we need that 
$\Ext^1(\QQ_p/\ZZ_p,\hat G(\tilde R))$ vanishes, which
is true since $p$ is surjective on $\hat G(\tilde R)$;
see Lemma \ref{Le-Ext1}.
The quasi-isomorphism between $\hat G(\tilde R)$ and $C$ gives (3).
Let $(P_{\tilde R},Q_{\tilde R},F,F_1)$ be the 
base change of $\PPP$ to $\DDD_{\tilde R}$ and let
$P_{\bar k}=W(\bar k)\otimes_{\WW(R)}P$.
The complex $C$ can be identified with the cone of
the map of complexes
\[
[Q_{\tilde R}\xrightarrow{F_1-1}P_{\tilde R}]
\longrightarrow
[P_{\bar k}[1/p]\xrightarrow{F_1-1}P_{\bar k}[1/p]].
\]
Since $\tilde R$ is a domain of characteristic zero, 
the ring $\WW(\tilde R)\subset W(\tilde R)$ has
no $p$-torsion, and thus the components 
of $C$ have no $p$-torsion. In particular,
$\Hom(\QQ_p/\ZZ_p,C)$ vanishes, which proves (4).
The $p$-adic completions of $P_{\tilde R}$ and $Q_{\tilde R}$
are $\hat P_{\tilde R}$ and $\hat Q_{\tilde R}$, while the
$p$-adic completion of $P_{\bar k}[1/p]$ is zero.
Thus Lemma \ref{Le-Ext1} gives (5).
\end{proof}


\section{A variant for the prime $2$}
\label{Se-variant-2}

We keep the notation and assumptions
of Section \ref{Se-perfect} and assume that $p=2$. 
One may ask what the preceding constructions
give if $\WW$ and $\DDD$ are replaced by their $v$-stabilised
variants $\WW^+$ and $\DDD^+$.
Recall that $\WW^+(R)=\WW(R)[v(1)]$ as a subring of $W(R)$, 
and we have a frame $\DDD^+_R=(\WW^+(R),\II^+_R,R,f,f_1)$ 
where $f_1$ is the inverse of $v$.
The $\WW(R)$-module $\WW^+(R)/\WW(R)$ 
is a one-dimensional $k$-vector space generated by $v(1)$; 
see \cite[Sections 1.4 and 2.5]{Lau-Relation}.
We put
\[
\DDD^+_{\tilde R}=\varinjlim_E\varprojlim_n\DDD^+_{R_E/\Fm^n_E}
=(\WW^+(\tilde R),\II^+_{\tilde R},\tilde R,f,f_1)
\]
with $E$ as in Section \ref{Se-perfect}, and denote 
the $2$-adic completion of $\DDD^+_{\tilde R}$ by
\[
\hat\DDD^+_{\tilde R}=(\hat\WW^+(\tilde R),\hat\II^+_{\tilde R},
\hat{\tilde R},f,f_1).
\]
For a $2$-divisible group $G$ over $R$ let $G^m$ be the
multiplicative part of $G$ and define $G^+$
by the following homomorphism of exact sequences of $2$-divisible groups:
\[
\xymatrix@M+0.2em{
0 \ar[r] &
G^m \ar[r] \ar[d]^p &
G \ar[r] \ar[d] &
G^u \ar[r] \ar@{=}[d] & 0 \\
0 \ar[r] &
G^m \ar[r] &
G^+ \ar[r] &
G^u \ar[r] & 0
}
\]

\begin{prop}
\label{Pr-Tate-perfect+}
Let  $G$ be a $2$-divisible group over $R$ with associated
Dieu\-donn\'e display $\PPP=\Phi_R(G)$. 
Let $\hat\PPP^+_{\!\tilde R}=
(\hat P^+_{\tilde R},\hat Q^+_{\tilde R},F,F_1^+)$ 
be the base change of $\PPP$ to $\hat\DDD^+_{\tilde R}$. 
There is a natural exact sequence of $\tilde\GGG_K$-modules
\[
0\longrightarrow T_2(G^+)\longrightarrow\hat Q^+_{\tilde R}\xrightarrow{F_1^+-1}
\hat P^+_{\tilde R}\longrightarrow 0.
\]
\end{prop}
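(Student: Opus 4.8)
\textbf{Proof proposal for Proposition \ref{Pr-Tate-perfect+}.}
The plan is to imitate the proof of Proposition \ref{Pr-Tate-perfect}, replacing the frame $\DDD$ by its $v$-stabilised variant $\DDD^+$ and the group $G$ by $G^+$ at the appropriate places. First I would record the relation between the two frames: the inclusion $\DDD_R\to\DDD^+_R$ is a $u$-homomorphism of frames with $u=u_0^{-1}$ (or rather the unit encoding the twist by $u_0$ in the definition of $\ff_1$ versus $f_1$), so base change along it sends the Dieudonn\'e display $\PPP$ of $G$ to a $\DDD^+_R$-window, and by the construction of $G^+$ in the displayed diagram this $\DDD^+_R$-window is precisely $\Phi^+_R(G^+)$, the $\DDD^+_R$-window attached to $G^+$. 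Here I would invoke the compatibility, established in \cite{Lau-Relation}, between the two classification functors $\Phi_R$ over $\DDD_R$ and over $\DDD^+_R$ and the operation $G\mapsto G^+$ on $2$-divisible groups; this is the conceptual input that identifies the window appearing in the statement with one coming from an actual $2$-divisible group.

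Granting that identification, the argument of Proposition \ref{Pr-Tate-perfect} goes through essentially verbatim over $\DDD^+$. Concretely I would again form, for finite extensions $E$ of $\hat K^{\nr}$ in $\tilde K$ and for $R_{E,n}=R_E/\Fm_E^n$, the complexes
\[
C^+_{E,n}=[Q^+_{E,n}\xrightarrow{F_1^+-1}P^+_{E,n}]\otimes[\ZZ\to\ZZ[1/2]],
\]
using that the functor $\BT$ over $\DDD^+_{R_{E,n}}$ is computed by the same ind-\'etale cohomology sheaf as over $\DDD_{R_{E,n}}$ (the $v$-stabilised frame has the same base ring and the windows differ only by the harmless unit twist), so that $G^+(R_{E,n})$ is quasi-isomorphic to $C^+_{E,n}$. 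Passing to $\varinjlim_E\varprojlim_n$ gives a complex $C^+$ quasi-isomorphic to $\hat G^+(\tilde R)$, and multiplication by $2$ is surjective on $\hat G^+(\tilde R)$ by Lemma \ref{Le-p-surj} over $\hat R^{\nr}$. The chain of iso- and quasi-isomorphisms
\[
T_2(G^+)\cong\Hom(\QQ_2/\ZZ_2,\hat G^+(\tilde R))\simeq R\Hom(\QQ_2/\ZZ_2,C^+)\simeq\Ext^1(\QQ_2/\ZZ_2,C^+[-1])\cong[\hat Q^+_{\tilde R}\xrightarrow{F_1^+-1}\hat P^+_{\tilde R}]
\]
is then justified step by step exactly as before, using Lemma \ref{Le-Ext1}: vanishing of $\Ext^1(\QQ_2/\ZZ_2,-)$ from $2$-surjectivity for the first quasi-isomorphism, vanishing of $\Hom(\QQ_2/\ZZ_2,-)$ from absence of $2$-torsion for the second, and the $2$-adic completion computation for the last. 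The only point needing a separate remark is that $\WW^+(\tilde R)\subset W(\tilde R)$, like $\WW(\tilde R)$, has no $2$-torsion because $\tilde R$ is a domain of characteristic zero, so $P^+_{\tilde R}$ and $Q^+_{\tilde R}$ have no $2$-torsion and their $2$-adic completions are $\hat P^+_{\tilde R}$ and $\hat Q^+_{\tilde R}$; the cone description of $C^+$ over $P^+_{\bar k}[1/2]$ (with $P^+_{\bar k}=W(\bar k)\otimes_{\WW^+(R)}P^+$, noting $\WW^+\otimes\bar k$ and $W(\bar k)$ agree appropriately modulo the one-dimensional difference which dies after inverting $2$) has $2$-adically trivial mapping target, as in the proof of Proposition \ref{Pr-Tate-perfect}.

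The main obstacle I expect is the identification of the $\DDD^+_R$-window $\DDD^+_R\otimes_{\DDD_R}\PPP$ with $\Phi^+_R(G^+)$, i.e. correctly matching the universal bookkeeping of the unit $u_0$ in the frame morphism $\DDD_R\to\DDD^+_R$ with the $p$-multiplication map on $G^m$ that defines $G^+$. One has to check that the extension $0\to G^m\to G^+\to G^u\to 0$ is exactly the one whose associated $\DDD^+_R$-window is obtained from that of $0\to G^m\to G\to G^u\to 0$ by the frame base change — equivalently, that the Verschiebung-type twist introduced when enlarging $\WW(R)$ to $\WW^+(R)$ has the effect of multiplying the multiplicative part of the window by $p$ and nothing else. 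This compatibility is part of the setup of \cite{Lau-Relation} (Sections 1.4, 2.5 and the treatment of the prime $2$), so the work is mostly in citing it precisely; once it is in place, the homological core of the proof is a formal repetition of Proposition \ref{Pr-Tate-perfect}. A secondary, minor point is to confirm that $\tilde\GGG_K$-equivariance is preserved throughout, which it is because all constructions — the frame $\DDD^+_{\tilde R}$, the functor $\BT$, and the operation $G\mapsto G^+$ — are compatible with the Galois action in the same way as in Section \ref{Se-perfect}.
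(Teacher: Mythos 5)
Your strategy --- rerun the proof of Proposition \ref{Pr-Tate-perfect} over the frame $\hat\DDD^+_{\tilde R}$ after identifying the base-changed window with ``the window of $G^+$'' --- is not the paper's route, and it has a genuine gap at exactly the point you defer to a citation. The compatibility you invoke is not what \cite{Lau-Relation} provides: \cite[Theorem 4.9]{Lau-Relation} (used by the paper itself in Section \ref{Se-Acris}) gives an isomorphism $\iota_*(\PPP)\cong\Phi^+_R(G)$, i.e.\ the base change of the Dieudonn\'e display of $G$ along $\DDD_R\to\DDD^+_R$ is the $v$-stabilised window attached to the crystal of $G$ itself, not of $G^+$. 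Moreover there is no $\BT$-functor for $\DDD^+$-windows over the Artinian rings $R_{E,n}$ whose points are computed by $[Q^+_{E,n}\xrightarrow{F_1^+-1}P^+_{E,n}]$; \cite[Proposition 9.4]{Lau-Relation} is stated for $\DDD$-windows only. Your remark that the two frames ``differ only by the harmless unit twist'' is precisely where the argument would fail: the twist is not harmless. If it were, the invariants of $\hat\PPP^+_{\tilde R}$ would compute $T_2(G)$ rather than $T_2(G^+)$, and that is false whenever $G^m\neq 0$ (the two lattices in $V_2(G)$ differ, and in general are not even abstractly isomorphic as Galois modules). The whole content of the proposition is to explain why $G^+$ rather than $G$ appears; this cannot be obtained by declaring the passage from $\DDD$ to $\DDD^+$ formally invisible, and proving the identification-plus-$\BT$-theory you would need amounts to reproving the proposition.

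The paper avoids all of this by never leaving the module level: it conjugates $F_1^+$ into $F_1=u_0^{-1}F_1^+$ via the unit $c_0$, uses the computation $\hat\WW^+(\tilde R)/\hat\WW(\tilde R)=\bar k\cdot v(1)$ together with $\bar\ff_1(v(1))=v(1)$ to produce a short exact sequence of two-term complexes whose sub is $[\hat Q_{\tilde R}\xrightarrow{F_1-1}\hat P_{\tilde R}]$ and whose quotient is $[\bar P_{\bar k}\xrightarrow{\bar F-1}\bar P_{\bar k}]$, and then observes that $\bar F-1$ is surjective with kernel an $\FF_2$-vector space of dimension the height of $G^m$ --- exactly the size of $T_2(G^+)/T_2(G)$. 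The result then follows from Proposition \ref{Pr-Tate-perfect} by the long exact sequence. I recommend you rebuild your proof around this comparison of the $\DDD$- and $\DDD^+$-complexes rather than around a hypothetical $\BT$-theory for $\DDD^+$-windows.
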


\noindent
In particular we have an isomorphism of\/ $\GGG_K$-modules
\[
\per^+:T_2(G^+)\xrightarrow\sim T(\hat\PPP^+_{\!\tilde R}).
\]

\begin{proof}
Let $\bar P_{\bar k}=\bar k\otimes_{\WW(R)}P$. 
We will construct the following commutative diagram
with exact rows, where $\bar F$ is induced by $F$:
\[
\xymatrix@M+0.2em{
0 \ar[r] &
\hat Q_{\tilde R} \ar[r] \ar[d]^{F_1-1} &
\hat Q^+_{\tilde R} \ar[r] \ar[d]^{F_1^+-1} &
\bar P_{\bar k} \ar[r] \ar[d]^{\bar F-1} & 0 \\
0 \ar[r] &
\hat P_{\tilde R} \ar[r] &
\hat P^+_{\tilde R} \ar[r] &
\bar P_{\bar k} \ar[r]  & 0 
}
\]
Here the Frobenius-linear endomorphism $\bar F$ is nilpotent 
if $G$ is unipotent, and is given by an invertible matrix if 
$G$ is of multiplicative type. Thus $\bar F-1$ is surjective
with kernel an $\FF_p$-vector space of dimension equal
to the height of $G^m$, and Proposition \ref{Pr-Tate-perfect+}
follows from Proposition \ref{Pr-Tate-perfect} if the diagram is constructed.

The natural homomorphism $\hat\WW(\tilde R)\to\hat\WW^+(\tilde R)$
is injective and defines a $u_0$-homomorphism of frames
$\iota:\hat\DDD_{\tilde R}\to\hat\DDD^+_{\tilde R}$ where the unit
$u_0\in\WW^+(\ZZ_2)$ is defined by $v(u_0)=2-[2]$; 
see \cite[Section 2.5]{Lau-Relation}.
Since $u_0$ maps to $1$ in $W(\FF_2)$
there is a unique unit $c_0$ of $\WW^+(\ZZ_2)$ which
maps to $1$ in $W(\FF_2)$ such that 
$c_0f(c_0^{-1})=u_0$, namely $c_0=u_0f(u_0)f^2(u_0)\cdots$; 
see the proof of \cite[Proposition 8.7]{Lau-Frames}.

The cokernel of $\iota$ is given by
\begin{equation}
\label{Eq-quot}
\hat\II^+_{\tilde R}/\hat\II_{\tilde R}
=\hat\WW^+(\tilde R)/\hat\WW(\tilde R)
=\bar k\cdot v(1);
\end{equation}
see \cite[Lemma 1.10]{Lau-Relation}.
Extend the operator $\ff_1$ of $\hat\DDD_{\tilde R}$ 
to $\hat\DDD^+_{\tilde R}$ by $\ff_1=u_0^{-1}f_1$. Then
$\ff_1$ induces an $f$-linear endomorphism $\bar\ff_1$
of $\bar k\cdot v(1)$. 
We claim that $\bar\ff_1(v(1))=v(1)$.
It suffices to prove this formula in
$\WW^+(\ZZ_2)/\WW(\ZZ_2)\cong\FF_2$,
and thus it suffices to show that $\ff_1(v(1))$
does not lie in $\WW(\ZZ_2)$. But $\WW(\ZZ_2)$ is stable
under the operator $x\mapsto\vv(x)=v(u_0x)$, 
and $\vv(\ff_1(v(1))=v(1)$ does not lie in $\WW(\ZZ_2)$. 
This proves the claim.

Let us extend the operator $F_1$ of $\hat\PPP_{\!\tilde R}$ 
to $\hat\PPP^+_{\!\tilde R}$ by $F_1=u_0^{-1}F_1^+$. 
Then we have $c_0(F_1-1)=(F_1^+-1)c_0$ as
homomorphisms $\hat Q^+_{\tilde R}\to\hat P^+_{\tilde R}$,
and it suffices to construct the above diagram 
with $F_1$ in place of $F_1^+$.
Now \eqref{Eq-quot} implies that
$\hat Q^+_{\tilde R}/\hat Q_{\tilde R}=
\hat P^+_{\tilde R}/\hat P_{\tilde R}=\bar P_{\bar k}\cdot v(1)$,
which gives the exact rows. Clearly the left hand square commutes.
The relation $F_1(ax)=\ff_1(a)F(x)$ for 
$x\in\hat P^+_{\tilde R}$ and $a\in\hat\II^+_{\tilde R}$
applied with $a=v(1)$, together with $\bar\ff_1(v(1))=v(1)$ 
shows that the right hand square commutes.
\end{proof}

\begin{rmk}
\label{Re-per-per+}
The period isomorphisms $\per$ and $\per^+$ satisfy
$\per^+=\tau_{c_0}\circ\per$, where 
$\tau_{c_0}:T(\hat\PPP_{\!\tilde R})\to T(\hat\PPP^+_{\!\tilde R})$ 
is the homomorphism defined in Section \ref{Se-frames}.
\end{rmk}


\section{The relation with $A_{\cris}$}
\label{Se-Acris}

Let $R$ be a complete discrete valuation ring
with perfect residue field $k$ of characteristic $p$ and
fraction field $K$ of characteristic zero.
In this case our ring $\hat{\tilde R}$ is equal to
$\hat{\bar R}$, the $p$-adic completion of the integral
closure of $R$ in $\bar K$. 
Let $A_{\cris}=A_{\cris}(R)$ and consider the frame%
\footnote{Here we need
that $\sigma_1(\Fil A_{\cris})$ generates $A_{\cris}$.
But $\xi=p-[\underline p]$ lies in $\Fil A_{\cris}$, 
and $\sigma_1(\xi)=1-[\underline p]^p/p$ is a unit
because $[\underline p]$ lies in the divided power
ideal $\Fil A_{\cris}+pA_{\cris}$.}
\[
\AAA_{\cris}=(A_{\cris},\Fil A_{\cris},\hat{\bar R},\sigma,\sigma_1)
\]
with $\sigma_1=p^{-1}\sigma$.
For a $p$-divisible group $G$ over $R$ let $\DD(G)$ be its
covariant Dieudonn\'e crystal. The free $A_{\cris}$-module
$M=\DD(G_{\!\hat{\bar R}})_{A_{\cris}}$ carries a
filtration $\Fil M$ and a $\sigma$-linear endomorphism $F$.
The operator $F_1=p^{-1}F$ is well-defined on $\Fil M$, 
and we get an $\AAA_{\cris}$-window 
$\MMM=(M,\Fil M,F,F_1)$; see \cite[A.2]{Kisin-crys} or 
\cite[Proposition 3.15]{Lau-Relation}.
Faltings \cite{Faltings-Integral} constructs a period homomorphism
\[
\per_{\cris}:T_p(G)\to\Fil M^{F=p}=T(\MMM)
\]
which is bijective if $p$ is odd; for $p=2$ the homomorphism
is injective with cokernel annihilated by $p$. More
precisely, for $p=2$ the cokernel is zero if $G$ is 
unipotent by \cite[Proposition 1.1.10]{Kisin-2adic}, 
while the cokernel is non-zero if $G$ is non-zero
and of multiplicative type; thus the period homomorphism 
extends to an isomorphism $T_p(G^+)\cong T(\MMM)$ with
$G^+$ as in Section \ref{Se-variant-2}.

Let us relate this with the period isomorphisms 
of Sections \ref{Se-perfect} and \ref{Se-variant-2}.
For the sake of uniformity, in the following we write
$\WW^+=\WW$ etc.\ if $p$ is odd.
Then $\hat\WW^+({\tilde R})\to\hat{\bar R}$ is a divided
power extension of $p$-adic rings for all $p$.
Recall that $A_{\cris}$ is the $p$-adic completion
of the divided power envelope of the kernel of 
$\theta:A_{\inf}\to\hat{\bar R}$, where $A_{\inf}=W(\RRR)$,
and $\RRR$ is the projective limit of $\bar R/p\bar R$
under Frobenius.

\begin{lemma}
\label{Le-kappa-inf}
There are unique homomorphisms $\varkappa_{\inf}$
and $\varkappa_{\cris}$ of extensions of $\hat{\bar R}$ 
as written below. They commute with Frobenius, and the
diagram commutes.
\[
\xymatrix@M+0.2em@C+1em{
A_{\inf} \ar[d] \ar[r]^-{\varkappa_{\inf}} &
\hat\WW(\tilde R) \ar[d] \\
A_{\cris} \ar[r]^-{\varkappa_{\cris}} &
\hat\WW^+(\tilde R)
}
\]
\end{lemma}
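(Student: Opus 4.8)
The plan is to construct $\varkappa_{\inf}$ first and then obtain $\varkappa_{\cris}$ by passing to divided power envelopes, using the universal property of $A_{\cris}$. For $\varkappa_{\inf}$, recall $A_{\inf}=W(\RRR)$ with $\RRR=\varprojlim_{\Frob}\bar R/p\bar R$. By functoriality of Witt vectors it suffices to produce a ring homomorphism $\RRR\to\hat\WW(\tilde R)/p\,\hat\WW(\tilde R)$ (compatible with Frobenius), or more directly a homomorphism $\RRR\to\hat{\tilde R}^{\flat}$ together with the observation that $\hat\WW(\tilde R)$ is, up to its non-$p$-torsion nature, close enough to $W$ of its reduction. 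The cleanest route: since $\hat{\tilde R}=\hat{\bar R}$ here and $W(\RRR)$ maps to $W(\bar R/p)=W(\hat{\bar R}/p)$, and since $\hat\WW(\tilde R)$ surjects onto $\hat{\tilde R}$ with kernel $\hat\II_{\tilde R}$ on which Frobenius is topologically nilpotent (it is divided by $v$), one uses that $A_{\inf}$ is the universal $p$-adically complete, $p$-torsion-free, Frobenius-equipped thickening of $\hat{\bar R}^{\flat}$ — more precisely, one lifts the composite $\RRR\to\hat{\bar R}/p\to$ (first Witt component) using the $\delta$-ring / Frobenius structure, exactly as in the construction of Fontaine's maps. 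The uniqueness of $\varkappa_{\inf}$ as a Frobenius-compatible homomorphism over $\hat{\bar R}$ follows because $A_{\inf}$ is $p$-adically separated, $p$-torsion-free, and $\Frob$ on $\Ker(\theta)$ is ``contracting'' modulo the obvious filtration, so any two lifts of the map on $\hat{\bar R}$ agree.

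For $\varkappa_{\cris}$, invoke the universal property of $A_{\cris}$: it is the $p$-adic completion of the divided power envelope of $\Ker(\theta:A_{\inf}\to\hat{\bar R})$, hence it receives a unique map from $A_{\inf}$ and is initial among $p$-adically complete PD-thickenings of $\hat{\bar R}$ through which $\theta$ factors with PD-structure. The target $\hat\WW^+(\tilde R)$ is such a thickening: by the discussion preceding the lemma (and \cite[Sections 1.4, 2.5]{Lau-Relation}) the $v$-stabilised Zink ring carries divided powers on the kernel of $\hat\WW^+(\tilde R)\to\hat{\tilde R}=\hat{\bar R}$, and it is $p$-adically complete and receives $\varkappa_{\inf}$ composed with $\hat\WW(\tilde R)\hookrightarrow\hat\WW^+(\tilde R)$. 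Therefore $\varkappa_{\inf}$ extends uniquely over the PD-envelope and then over its $p$-adic completion, giving $\varkappa_{\cris}$; the square commutes by construction, and $\varkappa_{\cris}$ commutes with Frobenius because $\varkappa_{\inf}$ does and the Frobenius on $A_{\cris}$ is the unique PD-continuous extension of the one on $A_{\inf}$.

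The main obstacle I anticipate is verifying that $\varkappa_{\cris}$ actually commutes with Frobenius rather than merely being a PD-morphism over $\hat{\bar R}$. On $A_{\cris}$ the divided powers $\gamma_n(\xi)=\xi^n/n!$ of $\xi=p-[\underline p]$ are genuine elements, and one must check their images $\gamma_n(\varkappa_{\inf}(\xi))$ in $\hat\WW^+(\tilde R)$ are compatible with $f$; this reduces to the identity $f(\gamma_n(x))=\gamma_n(f(x))$ modulo controlling denominators, which holds because $f$ is a PD-morphism on $\hat\WW^+(\tilde R)$ (the key point being that $\ff_1=p^{-1}f$ is well-defined on the PD-ideal, mirroring $\sigma_1=p^{-1}\sigma$ on $\Fil A_{\cris}$). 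Once this compatibility is in place, uniqueness of everything follows formally from $p$-torsion-freeness and $p$-adic separatedness of both targets, so no further estimates are needed.
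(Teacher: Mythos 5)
Your overall architecture coincides with the paper's: build $\varkappa_{\inf}$ from the universal property of the Witt vectors of the perfect ring $\RRR$, then extend to $\varkappa_{\cris}$ by the universal property of the ($p$-adically completed) divided power envelope, with the square commuting by construction and Frobenius compatibility of $\varkappa_{\cris}$ forced by that of $\varkappa_{\inf}$ together with $p$-torsion-freeness of $\hat\WW^+(\tilde R)$. The second half of your argument is essentially what the paper does and is fine.

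The gap is in the first half. To invoke Fontaine-style universality of $A_{\inf}=W(\RRR)$ --- equivalently, to make the Teichm\"uller-type lifts $[a]=\lim_m b_m^{p^m}$ converge and be independent of the choice of the lifts $b_m$ --- you must verify a hypothesis on the \emph{target}: that the kernel of $\hat\WW(\tilde R)/p^n\to\hat{\bar R}/p$ is a nilideal of bounded exponent. This verification is the actual content of the paper's proof: every $x$ in the kernel of $\hat\WW^+(\tilde R)/p^n\to\hat{\bar R}/p$ satisfies $x^{p^n}=0$ because that kernel carries divided powers, and since $p$ annihilates $\hat\WW^+(\tilde R)/\hat\WW(\tilde R)$ one deduces $x^{p^{n+1}}=0$ for $x$ in the kernel of $\hat\WW(\tilde R)/p^n\to\hat{\bar R}/p$. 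Your substitute justifications do not supply this. The claim that Frobenius is ``topologically nilpotent'' or ``contracting'' on $\hat\II_{\tilde R}$, respectively on $\Ker(\theta)$, is not usable as stated: $f(v(x))=px$ gives $f(\hat\II_{\tilde R})\subset p\hat\WW(\tilde R)$, but $f(p\hat\WW(\tilde R))=pf(\hat\WW(\tilde R))$ is not contained in $p^2\hat\WW(\tilde R)$, so iterating $f$ does not drive the augmentation ideal to zero $p$-adically. Likewise your uniqueness argument appeals to separatedness and torsion-freeness of the \emph{source} $A_{\inf}$, whereas uniqueness of a ring homomorphism out of $W(\RRR)$ compatible with the reduction to $\hat{\bar R}/p$ is a property of the target (uniqueness of the multiplicative lifts), resting on the same nilpotence statement. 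Once you insert the paper's short verification of that nilpotence --- which is where the divided powers on $\hat\WW^+(\tilde R)$ genuinely enter --- your proof closes up.
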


\begin{proof}
Briefly said, the universal property of $A_{\cris}$ gives
$\varkappa_{\cris}$, and the lemma explicates its construction.
Namely, all elements $x$ of the
kernel of $\hat\WW^+(\tilde R)/p^n\to\hat{\bar R}/p$
satisfy $x^{p^n}=0$ due to the divided powers on this ideal.
Since the cokernel of the inclusion $\hat\WW(\tilde R)\to
\hat\WW^+(\tilde R)$ is annihilated by $p$, 
for all $x$ in the kernel of 
$\hat\WW(\tilde R)/p^n\to\hat{\bar R}/p$
we get $x^{p^{n+1}}=0$.
Thus the universality of the Witt vectors gives a unique
homomorphism $\varkappa_{\inf}$ of extensions of $\hat{\bar R}/p$,
and the universality also implies that $\varkappa_{\inf}$ 
commutes with the Frobenius and with the projections 
to $\hat{\bar R}$.
Since $\hat\WW^+({\tilde R})\to\hat{\bar R}$ is a divided
power extension of $p$-adic rings, $\varkappa_{\inf}$ 
extends uniquely to $\varkappa_{\cris}$, and $\varkappa_{\cris}$
commutes with the Frobenius because this holds for $\varkappa_{\inf}$.
\end{proof}

Using that $\hat\WW^+(\tilde R)$ has no $p$-torsion, 
it follows that $\varkappa_{\cris}$ 
is a $\tilde\GGG_K$-equivariant strict frame homomorphism
\[
\varkappa_{\cris}:\AAA_{\cris}\to\hat\DDD^+_{\tilde R}.
\]

Now let $\PPP=\Phi_R(G)$ be the Dieudonn\'e display associated to $G$. 
The crystal $\DD(G)$ gives rise to a $\DDD^+_{R}$-window $\Phi^+_R(G)$
by \cite[Theorem 3.17]{Lau-Relation} if $p$ is odd and by
\cite[Proposition 3.21]{Lau-Relation} if $p=2$.
Its base change to $\hat\DDD^+_{\tilde R}$ is isomorphic
to $\varkappa_{\cris*}(\MMM)$ by the functoriality of $\DD(G)$.
Let $\iota:\DDD_R\to\DDD^+_R$ be the inclusion, which is the
identity when $p$ is odd.
We have an isomorphism of $\DDD_R^+$-windows
$\iota_*(\PPP)\cong\Phi_R^+(G)$ by definition if $p$ is odd
and by \cite[Theorem 4.9]{Lau-Relation} if $p=2$.
Thus we get an isomorphism of $\hat\DDD_{\tilde R}^+$-windows
$\hat\PPP^+_{\tilde R}\cong\varkappa_{\cris*}(\MMM)$, which induces
a homomorphism of $\GGG_K$-modules
\[
\tau:T(\MMM)\to T(\hat\PPP^+_{\!\tilde R})
\]
as explained in Section \ref{Se-frames}.
 
\begin{prop}
\label{Pr-commute}
The following diagram of $\GGG_K$-modules commutes
up to multiplication by a $p$-adic unit which is
independent of $G$, and $\tau$ is an isomorphism.
\[
\xymatrix@C+1em@M+0.2em{
T_p(G) \ar[r]^{\per_{\cris}} \ar[d]_{\per} & T(\MMM) \ar[d]^\tau \\
T(\hat\PPP_{\!\tilde R}) \ar[r]^{\tau_{c_0}} & 
T(\hat\PPP^+_{\!\tilde R}).
}
\]
\end{prop}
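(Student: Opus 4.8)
The plan is to reduce the statement to the case of connected (in fact, one-parameter) $p$-divisible groups, where everything can be computed explicitly, and then bootstrap to the general case using the étale part and a duality argument of the kind set up in Section~\ref{Se-frames}. First I would record the formal input: both $\per_{\cris}$ and $\per$ are built by the same mechanism (passing to modules of invariants of windows, in the sense of Section~\ref{Se-frames}), once we identify $\hat\PPP^+_{\tilde R}$ with $\varkappa_{\cris*}(\MMM)$ and $\hat\PPP^+_{\tilde R}$ with $\iota_*(\hat\PPP_{\tilde R})$ up to the twist $c_0$. The only genuine content of the proposition beyond these identifications is that the two ways of obtaining $T_p(G)$ inside $\hat P^+_{\tilde R}$ — via the Tate module of $G^+$ and the crystalline period map — agree, up to a unit independent of $G$.

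The key steps, in order: (1) Observe that $\tau\circ\per_{\cris}$ and $\tau_{c_0}\circ\per$ are both natural in $G$ and compatible with the formalism of bilinear forms of windows (functoriality of duality, the commutative square at the end of Section~\ref{Se-frames}); hence by the duality pairing $\PPP\times\PPP^t\to\DDD_R$ it suffices to check commutativity on a set of $p$-divisible groups generating enough of the category under duality and direct sums — concretely, it is enough to treat one nonzero connected group of dimension~$1$, say a Lubin–Tate formal group $G$ attached to a uniformizer, together with the multiplicative group $\mu_{p^\infty}$, since every $p$-divisible group is built from étale, multiplicative and connected-unipotent pieces and $T(\DDD_R)$ turns out to be free of rank one. (2) For $G$ étale, both $\per$ and $\per_{\cris}$ are the ``obvious'' identifications of $T_p(G)$ with Galois-fixed-type vectors, and $\tau$, $\tau_{c_0}$ are transparent, so the square commutes with unit~$1$; this disposes of the étale contribution. (3) For $G=\mu_{p^\infty}$ the window $\MMM$ and the Dieudonn\'e display $\PPP$ are the unit windows, $\Fil M = M$, $F=p\cdot\mathrm{id}$ on $M$ so $F_1=\mathrm{id}$, and one computes $T(\MMM)$ and $T(\hat\PPP^+_{\tilde R})$ directly as copies of $\ZZ_p$; the composites $\tau\circ\per_{\cris}$ and $\tau_{c_0}\circ\per$ then differ by a unit which one reads off from the normalization $\sigma_1=p^{-1}\sigma$ on $A_{\cris}$ versus $\ff_1$ on $\hat\DDD_{\tilde R}$ and from the explicit $\varkappa_{\cris}$, $c_0$. (4) For $G$ a formal group of dimension~$1$, use the crystalline description of Dieudonn\'e displays from \cite{Lau-Relation}: both $\MMM$ and $\hat\PPP^+_{\tilde R}$ are base changes of the same crystal along $\varkappa_{\cris}$, so $\tau$ is literally an isomorphism there, and the compatibility of $\per_{\cris}$ with this crystalline construction (Faltings' period map is defined via the same Dieudonn\'e crystal, cf.\ \cite[A.2]{Kisin-crys}) gives that the unit is the same one found in step~(3). (5) Finally, use additivity in short exact sequences $0\to G^m\to G\to G^u\to 0$ and the duality pairing to propagate the single universal unit to all $G$, and conclude that $\tau$ is an isomorphism in general because it is one on the étale part (step~2), on the multiplicative part (step~3), and on connected-unipotent parts (step~4), and extensions and duals of isomorphisms are isomorphisms.

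I expect the main obstacle to be step~(4) together with the bookkeeping that the unit is \emph{independent of $G$}. Checking that $\tau$ is an isomorphism is not hard once one knows $\MMM$ and $\hat\PPP^+_{\tilde R}$ come from the same crystal; the delicate point is matching Faltings' period homomorphism $\per_{\cris}$ — which is constructed rather indirectly, via the universal vectorial extension or the crystal evaluated on $A_{\cris}$ — with the explicit description of $\per$ coming from $\BT$ and Proposition~\ref{Pr-Tate-perfect}, so that one sees the two fixed-point modules sitting inside $\hat P^+_{\tilde R}$ \emph{with the same normalization} up to a constant that does not move with $G$. The cleanest way around this is to avoid computing $\per_{\cris}$ on the nose: instead, argue that any $\GGG_K$-equivariant natural transformation $T_p(G)\to T(\MMM)$ which is injective with $p$-power cokernel (as $\per_{\cris}$ is) must agree with the composite $\tau^{-1}\circ\tau_{c_0}\circ\per$ up to a global unit, because $\Hom$ of $T_p$ to itself, naturally in $G$ and respecting duality, is $\ZZ_p^\times$ acting by scalars — this is where the freeness of rank one of $T(\DDD_R)$ and the perfectness of the duality pairing from Section~\ref{Se-frames} do the real work. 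Once that rigidity statement is in hand, evaluating on $\mu_{p^\infty}$ pins down the unit and the proof is complete.
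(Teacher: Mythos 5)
Your overall shape --- pin down the unit on an explicit group, then propagate by naturality --- matches the paper's strategy, but the propagation step has a genuine gap, and it is exactly where the difficulty of the proposition lies. The paper computes the unit $\rho$ once, on the single group $G=\QQ_p/\ZZ_p$ (where $M=\Fil M=A_{\cris}$ with Frobenius $p\sigma$, so $T(\MMM)=A_{\cris}^{\sigma=1}=\ZZ_p$ and everything is explicit), and then forms the normalized functorial endomorphism $\gamma=p\rho\cdot(\per^+)^{-1}\circ\tau\circ\per_{\cris}$ of $T_p(G)$ for arbitrary $G$. The rigidity input that forces $\gamma=p$ is Tate's theorem \cite[4.2]{Tate}, which descends $\gamma$ to an endomorphism $\gamma_G$ of $G$ itself, functorial and compatible with finite extensions of $R$; one then gets induced endomorphisms $\gamma_H$ of all commutative finite flat $p$-group schemes $H$ via Raynaud's embedding theorem and evaluates through $H(\bar K)=\Hom_{R''}(\ZZ/p^r\ZZ,H)$ to conclude $\gamma_H=\gamma_{H_0}=p$. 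Your substitute for this --- the claim that any natural transformation respecting duality must agree with the given composite up to a global unit because ``$\Hom$ of $T_p$ to itself, naturally in $G$ and respecting duality, is $\ZZ_p^\times$ acting by scalars'' --- is precisely the statement that needs proof, and I do not see how to obtain it from the rank-one-ness of $T(\DDD_R)$ and the perfectness of the pairing alone; without Tate's theorem a functorial endomorphism of Tate modules cannot be played off against maps of finite group schemes in the way required.

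The decomposition into \'etale, multiplicative and connected-unipotent pieces also creates problems the paper avoids. You find the unit $1$ on \'etale groups and a possibly different unit on $\mu_{p^\infty}$; since the proposition asserts a single unit independent of $G$, you must show these agree, and that comparison (between groups admitting no nonzero maps between them over $R$) is again a global naturality statement your argument does not supply. Moreover the connected--\'etale and multiplicative--unipotent sequences are in general non-split, so ``additivity in short exact sequences'' does not by itself transfer commutativity of the square from the sub and the quotient to the extension. Finally, your step (4) --- that for a one-dimensional formal group the compatibility of Faltings' construction with the crystalline description of Dieudonn\'e displays ``gives that the unit is the same one found in step (3)'' --- is asserted rather than proved; you correctly identify this as the main obstacle, but the proposal does not overcome it. Note also that the paper deduces that $\tau$ is an isomorphism from the commutativity of the diagram together with the known behaviour of $\per_{\cris}$ and $\per^+$ (in particular the failure of bijectivity of $\per_{\cris}$ on nonzero multiplicative groups when $p=2$), rather than checking it piece by piece as you propose.
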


\begin{proof}[Proof of Proposition \ref{Pr-commute}]
The composition $\tau_{c_0}\circ\per$ extends to an isomorphism
$T_p(G^+)\cong T(\hat\PPP^+_{\tilde R})$ by 
Proposition \ref{Pr-Tate-perfect+} and Remark \ref{Re-per-per+}.
Thus if the diagram commutes, by the properties of $\per_{\cris}$
recalled above it follows that $\tau$ is an isomorphism.
Here we need only that the $\ZZ_p$-rank of $T(\MMM)$ is
$\le$ the height of $G$ and that  $\per_{\cris}$ is not bijective
when $p=2$ and $G$ is non-zero of multiplicative type.

Let us prove that the diagram commutes.
We start with the case $G=\QQ_p/\ZZ_p$.
Then $\per$ and $\tau_{c_0}$ are isomorphisms by
Propositions \ref{Pr-Tate-perfect} and \ref{Pr-Tate-perfect+}.
We have $T_p(G)=\ZZ_p$ and $M=\Fil M=A_{\cris}$ 
with Frobenius $p\sigma$ so that 
$T(\MMM)=A_{\cris}^{\sigma=1}=\ZZ_p$,
and $\per_{\cris}$ is the identity of $\ZZ_p$ by definition. 
Moreover $\hat Q^+_{\tilde R}=\hat P^+_{\tilde R}=\hat\WW^+_{\tilde R}$ 
with $F_1=f$, so $\tau$ can be identified with the homomorphism
$A_{\cris}^{\sigma=1}\to\hat\WW^+(\tilde R)^{f=1}$
induced by $\varkappa_{\cris}$.
This is a homomorphism of $\ZZ_p$-algebras with
source $\ZZ_p$ and target free of rank $1$ as a $\ZZ_p$-module
by Proposition \ref{Pr-Tate-perfect+}. Thus $\tau$ is bijective,
and it follows that $\tau_{c_0}\circ\per=\rho\cdot\tau\circ\per_{\cris}$
for a well defined $\rho\in\ZZ_p^*$.

Let now $G$ be arbitrary. Since the map $\tau_{c_0}\circ\per=\per^+$ 
is injective with cokernel annihilated by $p$, the composition
$\gamma=p\rho\cdot(\per^+)^{-1}\circ\tau\circ\per_{\cris}$ is a 
well-defined functorial endomorphism of $T_pG$. 
We have to show that $\gamma=p$. By \cite[4.2]{Tate}, 
$\gamma$ comes from an endomorphism $\gamma_G$ of $G$;
moreover $\gamma_G$ is functorial in $G$ and compatible with 
finite extensions of the base ring $R$ inside $\bar K$. 
The endomorphisms $\gamma_G$ induce a functorial endomorphism
$\gamma_H$ of each commutative finite flat $p$-group scheme $H$ 
over a finite extension $R'$ of $R$ inside $\bar K$ 
because $H$ can be embedded into a $p$-divisible group by
Raynaud \cite[3.1.1]{BBM}; cf.\ \cite[2.3.5]{Kisin-crys} 
or \cite[Proposition 4.1]{Lau-Relation}.
Assume that $H$ is annihilated by $p^r$ and let $H_0=\ZZ/p^r\ZZ$.
There is a finite extension $R''$ of $R'$ inside $\bar K$ 
such that $H(\bar K)=H(R'')=\Hom_{R''}(H_0,H)$.
Since $\gamma_{H_0}=p$ it follows that $\gamma_H=p$,
and thus $\gamma_G=p$ for all $G$.
\end{proof}


\section{The ring $\FS^{\nr}$}
\label{Se-FS-nr}

Let us recall the ring $\FS^{\nr}$ of \cite{Kisin-crys},
which is denoted $A^+_S$ in \cite{Fontaine-90}.
One starts with a two-dimensional complete regular local ring 
$\FS$ of characteristic zero
with perfect residue field $k$ of characteristic $p$ 
equipped with a Frobenius lift $\sigma:\FS\to\FS$.
Let $\delta:\FS\to W(\FS)$ be the unique ring homomorphism 
with $\delta\sigma=f\delta$ and $w_0\delta=\id$, and
let $t$ be a generator of the kernel of the composition
$\FS\to W(\FS)\to W(k)$.
Then $\FS=W(k)[[t]]$ and $\sigma(t)\in t\FS$.

Let $\OOO_{\EEE}$ be the $p$-adic completion of
$\FS[t^{-1}]$ and let $\EE=k((t))$ be its residue field.
Fix a maximal unramified extension $\OOO_{\EEE^{\nr}}$
of $\OOO_\EEE$ and let $\OOO_{\widehat{\EEE^{\nr}}}$
be its $p$-adic completion.
Let $\EE^{\sep}$ be the residue field of $\OOO_{\EEE^{\nr}}$, 
let $\bar\EE$ be an algebraic closure of $\EE^{\sep}$, 
let $\OOO_\EE=\FS/p\FS=k[[t]]$, and let 
$\OOO_{\bar\EE}\subset\bar\EE$ be its integral closure.
The Frobenius lift $\sigma$ on $\FS$ extends
uniquely to $\OOO_{\widehat{\EEE^{\nr}}}$ 
and induces an embedding 
\[
\OOO_{\widehat{\EEE^{\nr}}}\xrightarrow\delta 
W(\OOO_{\widehat{\EEE^{\nr}}})\to W(\bar\EE)
\]
with $\delta$ as above. One defines
$\FS^{\nr}=\OOO_{\widehat{\EEE^{\nr}}}\cap W(\OOO_{\bar\EE})$
inside $W(\bar\EE)$.
This ring is stable under $\sigma$,
and $\FS^{\nr}=\varprojlim\FS^{\nr}_n$ with
$\FS^{\nr}_n=(\OOO_{\EEE^{\nr}}/p^n\OOO_{\EEE^{\nr}})
\cap W_n(\OOO_{\bar\EE})$ inside $\WW_n(\bar\EE)$.
By \cite[B 1.8.3]{Fontaine-90} we have 
$\FS^{\nr}_n=\FS^{\nr}/p^n\FS^{\nr}$, in particular
$\FS^{\nr}$ is $p$-adic. 

\medskip
\begin{small}
For completeness we sketch a proof of the relation
$\FS^{\nr}_n=\FS^{\nr}/p^n\FS^{\nr}$.
It is easy to see that there is a commutative diagram 
with exact rows and injective vertical maps:
\[
\xymatrix@M+0.2em{
0 \ar[r] & \FS^{\nr}/p^{n-1}\FS^{\nr} \ar[r]^-p \ar[d] &
\FS^{\nr}/p^n\FS^{\nr} \ar[r] \ar[d] & 
\FS^{\nr}/p\FS^{\nr} \ar[r] \ar[d] & 0 \\
0 \ar[r] & \FS^{\nr}_{n-1} \ar[r]^-p &
\FS^{\nr}_n \ar[r] & \FS^{\nr}_1 
}
\]
By induction it suffices to show that the projection
$\FS^{\nr}\to\FS_1^{\nr}$ is surjective.
Now $\FS_1^{\nr}=\OOO_{\EE^{\sep}}$ is the union of
$\OOO_\FF$ over all finite separable extensions $\FF/\EE$,
and in each case $(\OOO_\FF,\sigma)$ is a free
Frobenius module over
$\OOO_\EE$ that becomes \'etale over $\EE$. Let 
$M=(\FS^r,\phi)$ be a Frobenius module over
$\FS$ that lifts $(\OOO_\FF,\sigma)$, so $M$ becomes \'etale
over $\OOO_{\EEE}$. The projection
\[
\Hom_{\FS,\phi}(M,\OOO_{\widehat{\EEE^{\nr}}})\to
\Hom_{\FS,\phi}(M,\EE^{\sep})=
\Hom_{\OOO_\EE,\sigma}(\OOO_\FF,\EE^{\sep})
\]
is surjective by \cite[A 1.2]{Fontaine-90}. 
Let $\alpha:\FS^r\to\OOO_{\widehat{\EEE^{\nr}}}$
be a lift of the inclusion $\OOO_\FF\to\OOO_{\EE^{\sep}}$.
We claim that
$\Hom_{\FS,\phi}(M,\OOO_{\widehat{\EEE^{\nr}}})=
\Hom_{\FS,\phi}(M,W(\bar\EE))=
\Hom_{\FS,\phi}(M,W(\OOO_{\bar\EE}))$;
then this group also coincides with
$\Hom_{\FS,\phi}(M,\FS^{\nr})$,
so $\alpha$ factors over $\FS^{\nr}$,
which implies that $\FS^{\nr}\to\FS_1^{\nr}$
is surjective as desired.
The first equality is clear by \cite[A 1.2]{Fontaine-90}.
For the second equality it suffices that
$\Hom_{\FS,\phi}(M,W(\bar\EE)/W(\OOO_{\bar\EE}))$
vanishes; this is true because every finitely generated
Frobenius-stable $\FS$-module in $W(\bar\EE)/W(\OOO_{\bar\EE})$
is zero.
\end{small}


\section{Breuil-Kisin modules}

Let $R$ be a complete discrete valuation ring with
perfect residue field $k$ of characteristic $p$ and fraction
field $K$ of characteristic zero. 
Let $\FS=W(k)[[t]]$ and let $\sigma:\FS\to\FS$ be a 
Frobenius lift that stabilises the ideal $t\FS$.
We choose a representation $R=\FS/E\FS$ where 
$E$ has constant term $p$. 
Let $\pi\in R$ be the image of $t$, so
$\pi$ generates the maximal ideal of $R$.

For an $\FS$-module $M$ let
$M^{(\sigma)}=\FS\otimes_{\sigma,\FS}M$.
We consider pairs $(M,\phi)$ where $M$ is a finite $\FS$-module
and where $\phi:M\to M^{(\sigma)}$ is an $\FS$-linear map 
with cokernel annihilated by $E$.
Following the \cite{Vasiu-Zink} terminology, $(M,\phi)$
is called a Breuil window (resp.\ a Breuil module) relative
to $\FS\to R$ if the $\FS$-module $M$ is free 
(resp.\ annihilated by a power of $p$ and of 
projective dimension at most one).
We have a frame in the sense of \cite{Lau-Frames}
\[
\BBB=(\FS,E\FS,R,\sigma,\sigma_1)
\]
with $\sigma_1(Ex)=\sigma(x)$ for $x\in\FS$.
Windows $\PPP=(P,Q,F,F_1)$ over $\BBB$ are equivalent to 
Breuil windows relative 
to $\FS\to R$ by the functor $\PPP\mapsto(Q,\phi)$
where $\phi:Q\to Q^{(\sigma)}$ 
is the composition of the inclusion $Q\to P$ with the
inverse of the isomorphism $Q^{(\sigma)}\cong P$ defined by 
$a\otimes x\mapsto aF_1(x)$. 

As in \cite[Section 6]{Lau-Relation} let $\varkappa$ be the ring homomorphism
\[
\varkappa:\FS\xrightarrow\delta W(\FS)\to W(R).
\]
Its image lies in $\WW(R)$ if and only if the endomorphism
of $t\FS/t^2\FS$ induced by $\sigma$ is divisible by $p^2$.
In this case, $\varkappa:\FS\to\WW(R)$ is a 
$\uu$-homomorphism of frames $\BBB\to\DDD_R$ for the
unit $\uu=\ff_1(\varkappa(E))$ of $\WW(R)$,
and $\varkappa$ induces an equivalence between
$\BBB$-windows and $\DDD_R$-windows, which are
equivalent to $p$-divisible groups over $R$.
As a consequence, Breuil modules relative to $\FS\to R$ are 
equivalent to commutative finite flat $p$-group schemes over $R$. 

Since $\uu$ maps to $1$ 
under $\WW(R)\to W(k)$, there is a unique
unit $\cc\in\WW(R)$ which maps to $1$ in $W(k)$
with $\cc\sigma(\cc^{-1})=\uu$. It is given by
$\cc=\uu\sigma(\uu)\sigma^2(\uu)\cdots$;
see the proof of \cite[Proposition 8.7]{Lau-Frames}.

\subsection{Modules of invariants}

For a Breuil module or Breuil window $(M,\phi)$
relative to $\FS\to R$ we write
$M^{\nr}=\FS^{\nr}\otimes_\FS M$ and 
$M^{\nr}_\EEE=\OOO_{\widehat{\EEE^{\nr}}}\otimes_{\FS}M$.
Consider the $\ZZ_p$-modules:
\begin{gather*}
T^{\nr}(M,\phi)=\{x\in M^{\nr}\mid\phi(x)=1\otimes x
\text{ in }\FS^{\nr}\otimes_{\sigma,\FS^{\nr}}M^{\nr}\} \\
T^{\nr}_\EEE(M,\phi)=\{x\in M^{\nr}_\EEE\mid\phi(x)=1\otimes x
\text{ in }\OOO_{\widehat{\EEE^{\nr}}}\otimes_{\sigma,\OOO_{\widehat{\EEE^{\nr}}}}M^{\nr}_\EEE\}
\end{gather*}
By \cite[A 1.2]{Fontaine-90}, 
$T^{\nr}_\EEE(M,\phi)$ is finitely generated, and the natural map
\begin{equation}
\label{Eq-A2}
\OOO_{\widehat{\EEE^{\nr}}}\otimes_{\ZZ_p}T^{\nr}_\EEE(M,\phi)
\to\OOO_{\widehat{\EEE^{\nr}}}\otimes_\FS M
\end{equation}
is bijective. It is pointed out in \cite{Kisin-crys,Kisin-2adic}
that the natural map
\begin{equation}
\label{Eq-B1}
T^{\nr}(M,\phi)\to T^{\nr}_\EEE(M,\phi)
\end{equation}
is bijective as well. If $(M,\phi)$ is a Breuil window, 
this follows from the proof of \cite[B 1.8.4]{Fontaine-90}. 
If $(M,\phi)$ is a Breuil module, the  map \eqref{Eq-B1} is
injective since the group
$X=\OOO_{\widehat{\EEE^{\nr}}}/\FS^{\nr}$ has
no $p$-torsion and thus $\Tor_1^\FS(X,M)$ is zero.
One can find a Breuil window $(M',\phi')$ and a surjective map 
$(M',\phi')\to(M,\phi)$. Then $T^{\nr}(M',\phi')\cong 
T^{\nr}_\EEE(M',\phi')\to T^{\nr}_\EEE(M,\phi)$ is surjective,
thus \eqref{Eq-B1} is surjective.

\subsection{The choice of $K_\infty$.}

Let $\hat{\bar\Fm}$ be the maximal ideal of $\hat{\bar R}$.
The power series $\sigma(t)$ defines a map 
$\sigma(t):\hat{\bar\Fm}\to\hat{\bar\Fm}$.
This map is surjective, and the inverse images of
algebraic elements are algebraic by
the Weierstrass preparation theorem.
Choose a system of elements $(\pi^{(n)})_{n\ge 0}$ of $\bar K$
with $\pi^{(0)}=\pi$ and $\sigma(t)(\pi^{(n+1)})=\pi^{(n)}$, and 
let $K_\infty$ be the extension of $K$ generated by all $\pi^{(n)}$. 
The system $(\pi^{(n)})$ corresponds to an element 
$\underline\pi\in\RRR=\varprojlim\bar R/p\bar R$,
the limit taken with respect to Frobenius. 

We embed $\OOO_\EE=k[[t]]$ into $\RRR$ by $t\mapsto\underline\pi$,
and identify $\EE^{\sep}$ and $\bar\EE$ with subfields of 
$\Frac\RRR$; thus $W(\bar\EE)\subset W(\Frac\RRR)$.
Then $\FS^{\nr}=\OOO_{\widehat{\EEE^{\nr}}}\cap W(\RRR)$,
and the unique ring homomorphism $\theta:W(\RRR)\to\hat{\bar R}$ 
which lifts the projection $W(\RRR)\to\bar R/p\bar R$
induces a homomorphism 
\[
pr^{\nr}:\FS^{\nr}\to\hat{\bar R}.
\]
Let us verify that its restriction to $\FS$ is the
given projection $\FS\to R$.

\begin{lemma}
\label{Le-pr-nr}
We have $pr^{\nr}(t)=\pi$.
\end{lemma}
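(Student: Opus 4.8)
Lemma \ref{Le-pr-nr} asserts that the homomorphism $pr^{\nr}\colon\FS^{\nr}\to\hat{\bar R}$, obtained by intersecting inside $W(\RRR)$ and applying $\theta$, sends the variable $t$ to the chosen uniformizer $\pi$. Since $t\in\FS\subset\FS^{\nr}$, and $pr^{\nr}$ is built from $\theta\colon W(\RRR)\to\hat{\bar R}$ precomposed with the embedding $\delta\colon\OOO_{\widehat{\EEE^{\nr}}}\to W(\OOO_{\widehat{\EEE^{\nr}}})\to W(\bar\EE)\subset W(\RRR)$, the computation amounts to tracing $t$ through $\delta$ and then $\theta$.

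The plan is as follows. First I would recall that $\delta\colon\FS\to W(\FS)$ is characterized by $w_0\circ\delta=\id$ and $f\circ\delta=\delta\circ\sigma$; in particular the zeroth Witt component of $\delta(t)$ is $t$ itself. Under the embedding $\OOO_\EE=k[[t]]\hookrightarrow\RRR$ sending $t\mapsto\underline\pi$, the element $\underline\pi\in\RRR=\varprojlim\bar R/p\bar R$ is by construction the compatible system $(\pi^{(n)}\bmod p)_{n\ge 0}$ with $\pi^{(0)}=\pi$. Now $\theta\colon W(\RRR)\to\hat{\bar R}$ lifts the projection $W(\RRR)\to\RRR\to\bar R/p\bar R$ onto the zeroth component, i.e. $\theta([\underline x])=x^{(0)\sharp}$ for a Teichmüller lift, where $x^{(0)\sharp}\in\hat{\bar R}$ is the canonical lift of the zeroth component $x^{(0)}$ of $\underline x$. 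The key point is to identify $pr^{\nr}(t)=\theta(\delta(t))$ with $\theta$ applied to the Teichmüller representative $[\underline\pi]$, or rather to show the discrepancy between $\delta(t)\in W(\RRR)$ and $[\underline\pi]$ lies in the kernel of $\theta$.

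The main step is thus: although $\delta(t)$ need not literally equal the Teichmüller element $[\underline\pi]$, its image in $\RRR=W(\RRR)/VW(\RRR)$ is the class of $\underline\pi$ (since $w_0(\delta(t))=t$ maps to $\underline\pi$ under $\OOO_\EE\hookrightarrow\RRR$), so $\delta(t)\equiv[\underline\pi]\pmod{VW(\RRR)}$. Applying $\theta$, and using that $\theta$ factors through the zeroth ghost component modulo $p$ — more precisely, using the standard fact that $\theta(\xi)$ depends only on $\xi\bmod V W(\RRR)$ up to the multiples that are absorbed because $\theta(V\eta)=p\cdot(\text{something})$ while we work $p$-adically — one reduces to showing $\theta([\underline\pi])=\pi$. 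But $\theta([\underline\pi])=\pi^{(0)\sharp}$, and $\pi^{(0)}=\pi\bmod p$ with canonical lift $\pi$ (as $\pi$ admits $p$-power roots $\pi^{(n)}$ compatibly, which is exactly what makes the Teichmüller/canonical lift well-defined and equal to $\pi$). Hence $pr^{\nr}(t)=\pi$.

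I expect the main obstacle to be the bookkeeping around $\theta$ and $V$: one must argue carefully that $\theta(\delta(t))=\theta([\underline\pi])$ despite $\delta(t)$ and $[\underline\pi]$ differing by a Verschiebung term. The cleanest route is probably to observe that both $\delta(t)$ and $[\underline\pi]$ have the same image in $\RRR$, that $\theta$ is continuous for the $p$-adic topology, and that $\theta\bmod p$ is literally the zeroth-component projection $W(\RRR)\to\RRR\to\bar R/p\bar R$; then $\theta(\delta(t))\equiv\pi\pmod p$, and a small additional argument (e.g. comparing with the Teichmüller lift, or noting that $\theta(\delta(t))$ is the canonical lift of $\underline\pi$ because $\delta(t)=(\delta(t^{1/p^n}))^{p^n}$-compatibility transports through $\sigma$ and the $\pi^{(n)}$) upgrades this to equality in $\hat{\bar R}$. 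Everything else is routine unwinding of definitions.
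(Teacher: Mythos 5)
Your argument works in the special case $\sigma(t)=t^p$, which is exactly the one-line case the paper dispatches first, but it does not prove the lemma in the generality in which it is stated: $\sigma$ is an arbitrary Frobenius lift with $\sigma(t)\in t\FS$, and the $\pi^{(n)}$ are successive $\sigma(t)$-roots of $\pi$, not $p$-power roots. Your reduction has two steps, and \emph{each one is false in general}. First, from $\delta(t)\equiv[\underline\pi]\pmod{VW(\RRR)}$ and $\theta(VW(\RRR))\subset p\hat{\bar R}$ you only get $\theta(\delta(t))\equiv\theta([\underline\pi])\pmod{p}$; working ``$p$-adically'' does not absorb an error term of valuation exactly one, so this does not identify $\theta(\delta(t))$ with $\theta([\underline\pi])$. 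Second, $\theta([\underline\pi])=\lim_n(\pi^{(n)})^{p^n}$ need not equal $\pi$: writing $\sigma^n(t)=t^{p^n}+ph_n(t)$, the relation $\sigma^n(t)(\pi^{(n)})=\pi$ gives $(\pi^{(n)})^{p^n}=\pi-ph_n(\pi^{(n)})$, and the error $ph_n(\pi^{(n)})$ converges to a limit that is in general nonzero (already for $\sigma(t)=t^2+2t$ with $p=2$). So $\underline\pi^{\sharp}\ne\pi$ in general, and the statement you want to reduce to is not true. The phrase about ``$\delta(t^{1/p^n})$-compatibility'' does not help, since $t^{1/p^n}\notin\FS$.

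The actual content of the lemma is that these two failures cancel: the higher Witt components $g_1,g_2,\dots$ of $\delta(t)=(g_0,g_1,\dots)$, determined by $g_0^{p^n}+pg_1^{p^{n-1}}+\cdots+p^ng_n=\sigma^n(t)$, exactly correct the discrepancy between $\underline\pi^{\sharp}$ and $\pi$. The paper's proof makes this precise by computing $\theta$ on the full Witt vector $x=\delta(t)(\underline\pi)$ using the formula $\theta(x)=\lim_n\bigl((\tilde x_{0,n})^{p^n}+p(\tilde x_{1,n})^{p^{n-1}}+\cdots+p^n\tilde x_{n,n}\bigr)$ and choosing the lifts $\tilde x_{i,n}=g_i(\pi^{(n)})$, so that each partial sum telescopes to $\sigma^n(t)(\pi^{(n)})=\pi$ on the nose. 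You need to carry out this computation (or an equivalent one) rather than discard the Verschiebung part of $\delta(t)$; without it the proof only establishes $pr^{\nr}(t)\equiv\pi\pmod p$.
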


\begin{proof}
The lemma is easy if $\sigma(t)=t^p$ since then $\delta(t)=[t]$ 
in $W(\FS)$, which maps to $[\underline\pi]$ in $W(\RRR)$, 
and $\theta([\underline\pi])=\pi$ in this case. 
In general let $\delta(t)=(g_0,g_1,\dots)$ with $g_i\in\FS$; 
these power series are determined by the relations 
\[
g_0^{p^n}+pg_1^{p^{n-1}}+\cdots+p^ng_n=\sigma^n(t)
\]
for $n\ge 0$. Let $x=(x_0,x_1,\ldots)\in W(\RRR)$ 
be the image of $t$, thus $x_i=g_i(\underline\pi)$.
Write $x_i=(x_{i,0},x_{i,1},\ldots)$ with 
$x_{i,n}=g_i(\underline\pi)_n\in\bar R/p\bar R$.
If $\tilde x_{i,n}\in\hat{\bar R}$ lifts $x_{i,n}$ we have
\[
pr^{\nr}(t)=\theta(x)=\lim_{n\to\infty}
\bigl((\tilde x_{0,n})^{p^n}+
p(\tilde x_{1,n})^{p^{n-1}}+\cdots+
p^n\tilde x_{n,n}\bigr).
\]
If we choose $\tilde x_{i,n}=g_i(\pi^{(n)})$, the sum in the limit 
becomes $\sigma^n(t)(\pi^{(n)})=\pi$, and the lemma is proved.
\end{proof}

Since the natural action of $\GGG_{K_\infty}=\Gal(\bar K/K_\infty)$
on $W(\Frac \RRR)$ is trivial on $\OOO_\EEE$ it
stabilises $\OOO_{\hat\EEE^{\nr}}$ and $\FS^{\nr}$
with trivial action on $\FS$.
Thus $\GGG_{K_\infty}$ acts on $T^{\nr}(M,\phi)$
for each Breuil window or Breuil module $(M,\phi)$.

\subsection{From $\FS^{\nr}$ to Zink rings}

The composition of the inclusion
$\FS^{\nr}\to W(\RRR)$ chosen above with the homomorphism
$\varkappa_{\inf}:W(\RRR)\to\hat\WW(\tilde R)$ from 
Lemma \ref{Le-kappa-inf} is a ring homomorphism
\[
\varkappa^{\nr}:\FS^{\nr}\to\hat\WW(\tilde R)
\]
that commutes with Frobenius and with the projections
to $\hat{\bar R}$.

\begin{lemma}
\label{Le-kappa-nr}
If the image of $\varkappa:\FS\to W(R)$ lies in
$\WW(R)$, i.e.\ if the endomorphism of $t\FS/t^2\FS$
induced by $\sigma$ is divisible by $p^2$,
the following diagram of rings commutes;
the vertical maps are the obvious inclusions.
\[
\xymatrix@M+0.2em@C+1em{
\FS \ar[r]^-\varkappa \ar[d] & \WW(R) \ar[d]^\iota \\ 
\FS^{\nr} \ar[r]^-{\varkappa^{\nr}} & \hat\WW(\tilde R)
}
\] 
\end{lemma}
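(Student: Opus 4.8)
The plan is to prove commutativity of the square by chasing the defining construction of $\varkappa^{\nr}$ through Lemma~\ref{Le-kappa-inf} and the definition of $\varkappa$ via Witt vector universality. Since $\hat\WW(\tilde R)$ has no $p$-torsion (it sits inside $W(\tilde R)$, which sits in a $\QQ$-algebra), it suffices to check that $\iota\circ\varkappa$ and $\varkappa^{\nr}$ agree on $\FS$ after reduction modulo $p$, or equivalently as maps of $\delta$-rings (i.e. after applying the respective $\delta$-lifts into Witt vectors), because a ring homomorphism into a $p$-torsion-free $p$-adic ring that is compatible with the projections to the respective residue-type quotients and commutes with Frobenius is uniquely determined once we know it on a dense set. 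Concretely, $\varkappa:\FS\to\WW(R)$ is by definition the composite $\FS\xrightarrow{\delta}W(\FS)\to W(R)$, where the second map is $W$ applied to the projection $\FS\to R$; and $\varkappa^{\nr}$ is by construction the composite $\FS^{\nr}\hookrightarrow W(\RRR)\xrightarrow{\varkappa_{\inf}}\hat\WW(\tilde R)$, where the inclusion $\FS^{\nr}\to W(\RRR)$ restricted to $\FS\subset\FS^{\nr}$ is precisely $\delta:\FS\to W(\FS)$ followed by $W$ of the embedding $\OOO_{\EE}\hookrightarrow\RRR$, $t\mapsto\underline\pi$.

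So the key step is to identify the two ways of landing in $\hat\WW(\tilde R)$: on one side we apply $\delta$ to go into $W(\FS)$, push forward along $\FS\to R$, then include $\WW(R)\hookrightarrow\hat\WW(\tilde R)$; on the other side we apply $\delta$ to go into $W(\FS)$, push forward along $\FS\to\OOO_{\EE}\to\RRR$ (using $t\mapsto\underline\pi$), then apply $\varkappa_{\inf}:W(\RRR)\to\hat\WW(\tilde R)$. Because $\delta$ is functorial and $\varkappa_{\inf}$ is by Lemma~\ref{Le-kappa-inf} the unique homomorphism of extensions of $\hat{\bar R}$ commuting with Frobenius, and because $\iota:\WW(R)\to\hat\WW(\tilde R)$ is itself such a compatible map, the whole thing reduces to checking that the two resulting maps $\FS\to\hat{\bar R}$ on the level of the base rings coincide — and that is exactly the content of Lemma~\ref{Le-pr-nr}, which says $pr^{\nr}(t)=\pi$, i.e. the composite $\FS\to\FS^{\nr}\xrightarrow{pr^{\nr}}\hat{\bar R}$ equals the given projection $\FS\to R\hookrightarrow\hat{\bar R}$. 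Indeed, uniqueness in Lemma~\ref{Le-kappa-inf} (applied now over $\FS$ rather than abstractly) forces the two composites $\FS\rightrightarrows\hat\WW(\tilde R)$ to agree: both are ring homomorphisms lifting the same map $\FS\to\hat{\bar R}$, both commute with Frobenius, and $\hat\WW(\tilde R)$ is $p$-torsion free and $p$-adically separated, so they are equal by the same Witt-vector universality argument used to construct $\varkappa_{\inf}$.

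The main obstacle I anticipate is bookkeeping the compatibilities carefully rather than any genuine difficulty: one must verify that the inclusion $\FS^{\nr}\to W(\RRR)$ really does restrict on $\FS$ to $W(t\mapsto\underline\pi)\circ\delta$ (this is built into the setup of Section~\ref{Se-FS-nr} together with the embedding $\OOO_{\EE}\hookrightarrow\RRR$ fixed above Lemma~\ref{Le-pr-nr}), and that $\varkappa_{\inf}$ restricted to the image of $W(\FS)$ followed by the base-point map recovers the projection to $\hat{\bar R}$ — which is again part of Lemma~\ref{Le-kappa-inf}. Once these are in place, the uniqueness clause finishes the proof: there is at most one Frobenius-equivariant ring homomorphism $\FS\to\hat\WW(\tilde R)$ lifting the projection $\FS\to R\subset\hat{\bar R}$, because the kernel of $\hat\WW(\tilde R)/p^n\to\hat{\bar R}/p$ consists of elements killed by a fixed power of Frobenius-iterates (cf. the proof of Lemma~\ref{Le-kappa-inf}), and both $\iota\circ\varkappa$ and $\varkappa^{\nr}$ qualify. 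Hence the diagram commutes.
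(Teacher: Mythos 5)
Your overall strategy --- reduce commutativity of the square to the identity of the two induced maps $\FS\to\hat{\bar R}$ (which is Lemma \ref{Le-pr-nr}) and then invoke a uniqueness principle for Frobenius-equivariant lifts --- is appealing, but the uniqueness principle you rely on is exactly the point that is not established, and it is not covered by the mechanism you cite. The Witt-vector universality used in the proof of Lemma \ref{Le-kappa-inf} (existence and uniqueness of a map out of $W(\RRR)$ into an extension of $\hat{\bar R}/p$ whose kernel is suitably nilpotent) applies because $\RRR$ is \emph{perfect}; it gives nothing for maps out of $\FS$, since $\FS/p\FS=k[[t]]$ is not perfect. You propose to let Frobenius-equivariance substitute for perfectness, i.e.\ to claim that two ring homomorphisms $\alpha,\beta:\FS\to\hat\WW(\tilde R)$ commuting with the Frobenius lifts and with $w_0\circ\alpha=w_0\circ\beta$ must coincide. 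The natural d\'evissage for this (using $\alpha(\sigma x)-\beta(\sigma x)=f(\alpha(x)-\beta(x))$ and $f(\hat\II_{\tilde R})\subset p\,\hat\WW(\tilde R)$) only yields $\alpha\equiv\beta \bmod p^n$ on the subring $\sigma^n(\FS)$, which does not propagate to all of $\FS$; and the alternative argument via ghost components ($w_0\circ f^n$ determines an element of $W(\tilde R)$ because $\tilde R$ is $p$-torsion-free) is available on $\WW(\tilde R)$ but not obviously on its $p$-adic completion $\hat\WW(\tilde R)$, where $\varkappa^{\nr}(t)$ a priori lives. So as written the key step is asserted rather than proved.

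The paper sidesteps this entirely by a direct computation at the level of Witt coordinates: writing $\delta(t)=(g_0,g_1,\dots)$ and $\delta(g_i)=(h_{i,0},h_{i,1},\dots)$, it chooses the explicit lifts $y_{i,n}=(h_{i,0}(\pi^{(n)}),h_{i,1}(\pi^{(n)}),\dots)\in\WW(\tilde R)$ of $g_i(\underline\pi)_n$ and verifies, by evaluating Witt polynomials in the \emph{uncompleted} ring $\WW(\tilde R)\subset W(\tilde R)$ (where ghost components are injective), that the limit defining $\varkappa_{\inf}$ equals $(g_0(\pi),g_1(\pi),\dots)=\iota(\varkappa(t))$. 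In effect this shows simultaneously that $\varkappa^{\nr}(t)$ lies in $\WW(\tilde R)$ and that it equals $\iota(\varkappa(t))$ --- which is precisely the information your uniqueness shortcut would need as an input. To repair your argument you would either have to prove the uniqueness statement for $\hat\WW(\tilde R)$ (e.g.\ by showing the relevant elements already lie in $\WW(\tilde R)$, which amounts to the paper's computation), or carry out that computation directly as the paper does.
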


\begin{proof}
The assumption $\varkappa(\FS)\subset\WW(R)$
is equivalent to $\delta(\FS)\subset\WW(\FS)$;
see \cite[Proposition 6.2]{Lau-Relation}.
As in the proof of Lemma \ref{Le-pr-nr} we write
$\delta(t)=(g_0,g_1,\ldots)$ with $g_i\in\FS$.
We have to show that 
\[
\varkappa_{\inf}((g_0(\underline\pi),g_1(\underline\pi),\ldots))
=
\iota((g_0(\pi),g_1(\pi),\ldots))
\]
in $\hat\WW(\tilde R)$.
Again, if $y_{i,n}\in\hat\WW(\tilde R)$ is a lift
of $x_{i,n}=g_i(\underline\pi)_n\in\bar R/p\bar R$, 
the left hand side of this equation is equal to
\[
\lim_{n\to\infty}
\bigl((y_{0,n})^{p^n}+p(y_{1,n})^{p^{n-1}}
+\cdots+p^ny_{n,n}\bigr).
\]
We will choose $y_{i,n}\in\WW(\tilde R)$ (no $p$-adic completion)
such that the sum in the limit is equal to 
$(g_0(\pi),g_1(\pi),\ldots)$ in $\WW(\tilde R)$;
this will prove the lemma.
In the special case $\sigma(t)=t^p$, thus $g_0=t$ and $g_i=0$
for $i\ge 1$, we can simply take $y_{0,n}=[\pi^{(n)}]$
and $y_{i,n}=0$ for $i\ge 1$; then the calculation is trivial.
In general, let $\delta(g_i)=(h_{i,0},h_{i,1},\ldots)$ in $\WW(\FS)$,
so the power series $h_{i,j}$ are determined by the equations
\[
h_{i,0}^{p^m}+ph_{i,1}^{p^{m-1}}+\cdots+p^mh_{i,m}=\sigma^m(g_i)
=g_i(\sigma^m(t))
\]
for $m\ge 0$,
and put $y_{i,n}=(h_{i,0}(\pi^{(n)}),h_{i,1}(\pi^{(n)}),\ldots)
\in\WW(\tilde R)$. Since the Witt polynomials 
$w_m(X_0,\ldots,X_m)=X_0^{p^m}+\cdots+p^mX_m$ 
for $m\ge 0$ define an injective map
$\WW(\tilde R)\subset W(\tilde R)\to\tilde R^\infty$, 
we have to show that for $n,m\ge 0$ the following holds.
\[
w_m((y_{0,n})^{p^n}+p(y_{1,n})^{p^{n-1}}+\cdots+p^ny_{n,n})
=w_m((g_0(\pi),g_1(\pi),\ldots))
\]
The right hand side is equal to $\sigma^m(t)(\pi)$.
Since $w_m(y_{i,n})=g_i(\sigma^m(t)(\pi^{(n)}))$,
the left hand side is equal to 
$\sigma^n(t)(\sigma^m(t)(\pi^{(n)}))=\sigma^{n+m}(t)(\pi^{(n)})
=\sigma^m(t)(\pi)$ too.
\end{proof}

Define a frame 
\[
\BBB^{\nr}=(\FS^{\nr},E\FS^{\nr},
\FS^{\nr}/E\FS^{\nr},\sigma,\sigma_1)
\] 
with $\sigma_1(Ex)=\sigma(x)$ for $x\in\FS^{\nr}$.

\begin{lemma}
The element $\uu'=\ff_1(\varkappa^{\nr}(E))\in\hat\WW(\tilde R)$ 
is a unit, and the ring homomorphism 
$\varkappa^{\nr}:\FS^{\nr}\to\hat\WW(\tilde R)$ 
is a $\uu'$-homomorphism of frames 
$\varkappa^{\nr}:\BBB^{\nr}\to\hat\DDD_{\tilde R}$.
\end{lemma}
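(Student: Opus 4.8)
The plan is to verify the conditions defining a $\uu'$-homomorphism of frames one by one; all of them are formal except the claim that $\uu'$ is a unit, which I will reduce via Lemma~\ref{Le-kappa-nr} to the already-established analogue over $\WW(R)$. Throughout I assume, as is needed everywhere in this subsection, that $\varkappa(\FS)\subseteq\WW(R)$, i.e.\ that $p^2$ divides the coefficient of $t$ in $\sigma(t)$.

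First I would check that $\varkappa^{\nr}(E\FS^{\nr})\subseteq\hat\II_{\tilde R}$. By construction $\varkappa^{\nr}$ is the restriction to $\FS^{\nr}\subset W(\RRR)$ of $\varkappa_{\inf}$, so by Lemma~\ref{Le-kappa-inf} the composite of $\varkappa^{\nr}$ with the projection $\hat\WW(\tilde R)\to\hat{\bar R}$ is $pr^{\nr}$; by Lemma~\ref{Le-pr-nr} the restriction of $pr^{\nr}$ to $\FS$ is the given surjection $\FS\to R$, which kills $E$. Hence $\varkappa^{\nr}(E)$, and so all of $\varkappa^{\nr}(E\FS^{\nr})$, lies in the kernel $\hat\II_{\tilde R}$ of $\hat\WW(\tilde R)\to\hat{\bar R}=\hat{\tilde R}$. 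In particular $\varkappa^{\nr}$ induces a ring homomorphism $\FS^{\nr}/E\FS^{\nr}\to\hat{\tilde R}$, and this together with the already-recorded compatibility of $\varkappa^{\nr}$ with $\sigma$ and $f$ supplies all the ring-level data of a morphism of frames $\BBB^{\nr}\to\hat\DDD_{\tilde R}$.

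Next I would verify the twisted compatibility of $\sigma_1$ with $\ff_1$. Writing an element of $E\FS^{\nr}$ as $Ey$ with $y\in\FS^{\nr}$, and using that $\ff_1$ on $\hat\II_{\tilde R}$ is $f$-semilinear (so $\ff_1(a\xi)=f(a)\ff_1(\xi)$ for $a\in\hat\WW(\tilde R)$, $\xi\in\hat\II_{\tilde R}$), that $\varkappa^{\nr}$ commutes with Frobenius, and that $\sigma_1(Ey)=\sigma(y)$, one gets
\[
\ff_1\bigl(\varkappa^{\nr}(Ey)\bigr)=\ff_1\bigl(\varkappa^{\nr}(y)\cdot\varkappa^{\nr}(E)\bigr)=f\bigl(\varkappa^{\nr}(y)\bigr)\,\ff_1\bigl(\varkappa^{\nr}(E)\bigr)=\uu'\cdot\varkappa^{\nr}\bigl(\sigma_1(Ey)\bigr).
\]
So, once $\uu'$ is known to be a unit, $\varkappa^{\nr}$ is a $\uu'$-homomorphism of frames. (That $\BBB^{\nr}$ is a frame at all is unproblematic: $E$ is a non-zerodivisor in the $p$-torsion-free $p$-adic ring $\FS^{\nr}$, so $\sigma_1$ is well defined on $E\FS^{\nr}$, and $\sigma_1(E)=1$, so its image generates $\FS^{\nr}$.)

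Finally I would prove that $\uu'=\ff_1(\varkappa^{\nr}(E))$ is a unit of $\hat\WW(\tilde R)$. Since $E\in\FS$, Lemma~\ref{Le-kappa-nr} gives $\varkappa^{\nr}(E)=\iota(\varkappa(E))$ with $\iota:\WW(R)\to\hat\WW(\tilde R)$ the natural inclusion. This $\iota$ underlies a strict morphism of frames $\DDD_R\to\hat\DDD_{\tilde R}$: it commutes with $f$ and sends $\II_R$ into $\hat\II_{\tilde R}$, and since every ring here is $p$-torsion-free and $u_0$ is a unit it is automatically $\ff_1$-compatible too (both $\iota(\ff_1(x))$ and $\ff_1(\iota(x))$ are the unique element whose product with the non-zerodivisor $pu_0$ equals $f(\iota(x))$). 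Hence $\uu'=\ff_1(\iota(\varkappa(E)))=\iota(\ff_1(\varkappa(E)))=\iota(\uu)$, and since $\uu=\ff_1(\varkappa(E))$ is a unit of $\WW(R)$ — this is exactly the assertion, recalled earlier from \cite{Vasiu-Zink} and \cite{Lau-Relation}, that $\varkappa:\BBB\to\DDD_R$ is a $\uu$-homomorphism of frames — its image $\iota(\uu)$ is a unit of $\hat\WW(\tilde R)$. The only step with real content is this last one; the single subtlety is the $u_0$-twist in the relation $f=pu_0\ff_1$ on the structure ideal when $p=2$, but as $u_0$ is a unit this is harmless.
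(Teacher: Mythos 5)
Your verification of the frame--homomorphism axioms (that $\varkappa^{\nr}(E)$ lands in $\hat\II_{\tilde R}$ via Lemmas \ref{Le-kappa-inf} and \ref{Le-pr-nr}, and the computation $\ff_1(\varkappa^{\nr}(Ey))=\uu'\cdot\varkappa^{\nr}(\sigma_1(Ey))$) matches the paper's. Where you diverge is the only substantive point, the unit claim. The paper proves it \emph{unconditionally}: the projection $\tilde R\to\bar k$ induces a local homomorphism $\hat\WW(\tilde R)\to W(\bar k)$ commuting with $f$ and $\ff_1$; the composite $\FS\to\FS^{\nr}\to\hat\WW(\tilde R)\to W(\bar k)$ commutes with Frobenius and hence sends $t\mapsto 0$, so $E\mapsto p$ and $\uu'\mapsto\ff_1(p)=v^{-1}(p)=1$, whence $\uu'$ is a unit by locality. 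You instead invoke Lemma \ref{Le-kappa-nr} to get $\varkappa^{\nr}(E)=\iota(\varkappa(E))$ and then $\uu'=\iota(\uu)$ with $\uu$ a unit of $\WW(R)$; this is correct (including your justification that $\iota$ is $\ff_1$-compatible via $f=pu_0\ff_1$ and $p$-torsion-freeness), and it is essentially the observation the paper records immediately \emph{after} the lemma (``Then $\uu'$ is the image of $\uu$''). The trade-off: your route is shorter at the final step but only establishes the lemma under the hypothesis $\varkappa(\FS)\subseteq\WW(R)$ of Lemma \ref{Le-kappa-nr}, whereas in the paper this lemma is stated and proved before that hypothesis is imposed (``From now on we assume\dots'' comes afterwards), so the paper's version holds for an arbitrary Frobenius lift stabilising $t\FS$. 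Since the hypothesis is in force for everything that follows (Proposition \ref{Pr-bijective}), your conditional version suffices for the applications, but you should state explicitly that you are proving a restricted form of the lemma, or adopt the reduction-to-$W(\bar k)$ argument to get the general statement.
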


\begin{proof}
Clearly $\varkappa^{\nr}$ commutes with the projections
to $\hat{\bar R}$ and with the Frobenius.
For $x\in\FS^{\nr}$ we compute
$\ff_1(\varkappa^{\nr}(Ex)=\ff_1(\varkappa^{\nr}(E))\cdot f(\varkappa^{\nr}(x))
=\uu'\cdot\varkappa^{\nr}(\sigma_1(Ex))$ as required.
It remains to show that $\uu'$ is a unit. 
The projection $\tilde R\to\bar k$ induces a local
homomorphism of local rings 
$\hat\WW(\tilde R)\to W(\bar k)$ that commutes with
$f$ and $\ff_1$.
The composition $\FS\to\FS^{\nr}\to\hat\WW(\tilde R)\to W(\bar k)$
commutes with Frobenius and is thus equal to
the homomorphism $t\mapsto 0$.
Thus $E$ maps to $p$ in $W(\bar k)$, so $\uu'$ maps to
$\ff_1(p)=v^{-1}(p)=1$ in $W(\bar k)$, and it follows
that $\uu'$ is a unit.
\end{proof}

From now on we assume that the hypotheses of 
Lemma \ref{Le-kappa-nr} are satisfied. Then
$\uu'$ is the image of $\uu$, and we get a
commutative square of frames,
where the horizontal arrows are $\uu$-homo\-morphisms
and the vertical arrows are strict:
\[
\xymatrix@M+0.2em@C+1em{
\BBB \ar[r]^-\varkappa \ar[d] & \DDD_R \ar[d] \\
\BBB^{\nr} \ar[r]^-{\varkappa^{\nr}} & \hat\DDD_{\tilde R}
}
\]
Here $\GGG_K$ acts on $\hat\DDD_{\tilde R}$ and
$\GGG_{K_\infty}$ acts on $\BBB^{\nr}$, and
$\varkappa^{\nr}$ is $\GGG_{K_{\infty}}$-equivariant.

\subsection{Identification of modules of invariants}

Now we can state the main result of this section.
Let $(M,\phi)$ be a Breuil window relative to $\FS\to R$
with associated $\BBB$-window $\PPP$, and let $\PPP^{\nr}$
be the base change of $\PPP$ to $\BBB^{\nr}$.
By definition we have $T^{\nr}(M,\phi)=T(\PPP^{\nr})$
as $\GGG_{K_\infty}$-modules.
Let $\PPP_\DDD$ be the base change of $\PPP$ to $\DDD_R$
and let $\hat\PPP_{\!\hat\DDD}$ be the
common base change of $\PPP^{\nr}$ and $\PPP_{\DDD}$ to
$\hat\DDD_{\tilde R}$. 
As in Section \ref{Se-frames}, multiplication by $\cc$
induces a $\GGG_{K_\infty}$-invariant homomorphism 
\[
\tau(\PPP^{\nr}):
T(\PPP^{\nr})\to T(\hat\PPP_{\!\hat\DDD}).
\]
We recall that the $\GGG_K$-module $T(\hat\PPP_{\!\hat\DDD})$ is
isomorphic to the Tate module of the 
$p$-divisible group associated to $(M,\phi)$;
see Proposition \ref{Pr-Tate-perfect}.

\begin{prop}
\label{Pr-bijective}
The homomorphism $\tau(\PPP^{\nr})$ is bijective.
\end{prop}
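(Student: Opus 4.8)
The plan is to reduce the statement to the case of an \'etale $p$-divisible group by a duality argument, following the sketch in the introduction. First I would treat the \'etale case directly: if $(M,\phi)$ is a Breuil window whose associated $p$-divisible group $G$ is \'etale, then the Frobenius $\phi$ is (after inverting $t$, i.e.\ over $\OOO_\EE$) an isomorphism of the right type, so $T^{\nr}_\EEE(M,\phi)$ is already the full Tate module and the map \eqref{Eq-A2} is bijective. On the other hand, for an \'etale group the Dieudonn\'e display is of the form where $Q = I_R \cdot P$ and $F_1$ is essentially $F$ (a unit), so $T(\hat\PPP_{\!\hat\DDD})$ is also visibly the Tate module, computed over $W(\bar k)$-type coefficients. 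In this situation $\tau(\PPP^{\nr})$ is a map between two finite free $\ZZ_p$-modules of the same rank (the height of $G$), both sitting inside $W(\bar\EE)$-style rings, and one checks it is an isomorphism by comparing with the bijection \eqref{Eq-A2} and the analogous statement for $\hat\DDD_{\tilde R}$ (Proposition \ref{Pr-Tate-perfect}); the key point is that $\varkappa^{\nr}$ is compatible with all the projections to $\hat{\bar R}$ and with Frobenius, so it carries one description of the Tate module isomorphically onto the other.

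Next I would handle the general case using duality as formalized in Section \ref{Se-frames}. For a Breuil window $(M,\phi)$ with associated $\BBB$-window $\PPP$, pass to the Cartier dual: there is a dual $\BBB$-window $\PPP^t$ with a perfect bilinear form $\PPP \times \PPP^t \to \BBB$, and the corresponding $p$-divisible group is the Serre dual $G^\vee$. Base-changing the bilinear form along $\varkappa^{\nr}$ (multiplied by $\cc^{-1}$, as in ``functoriality of duality'') gives a commutative diagram relating $\tau(\PPP^{\nr})$, $\tau((\PPP^t)^{\nr})$, and $\tau(\BBB) \colon T(\BBB^{\nr}) \to T(\hat\DDD_{\tilde R})$ (the rank-one ``unit'' case). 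The plan is: (i) check that the bilinear forms $T(\PPP^{\nr}) \times T((\PPP^t)^{\nr}) \to T(\BBB^{\nr})$ and $T(\hat\PPP_{\!\hat\DDD}) \times T(\hat\PPP^t_{\!\hat\DDD}) \to T(\hat\DDD_{\tilde R})$ are both perfect pairings of finite free $\ZZ_p$-modules — the target being free of rank one in each case, which for the Zink-ring side is exactly the content of Proposition \ref{Pr-Tate-perfect} applied to $\mu_{p^\infty}$ (i.e.\ $G = \mu_{p^\infty}$, whose Tate module is $\ZZ_p(1)$), and for $\BBB^{\nr}$ is the analogous rank-one computation; (ii) check that $\tau(\BBB)$ is bijective, again a direct rank-one computation identifying both sides with the set of $x$ in the relevant ring with $\phi(x) = 1 \otimes x$ and using that $\varkappa^{\nr}$ is an isomorphism onto its image compatibly with Frobenius. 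Then in the commutative square of pairings, $\tau(\BBB)$ bijective plus perfectness of the two pairings forces $\tau(\PPP^{\nr})$ to be injective with torsion-free cokernel; running the same argument for $\PPP^t$ gives the same for $\tau((\PPP^t)^{\nr})$, and a rank count (both sides have $\ZZ_p$-rank equal to the height of $G$) upgrades injectivity to bijectivity.

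The main obstacle I anticipate is the \'etale base case together with the rank-one case $T(\BBB) \cong T(\hat\DDD_{\tilde R})$: one must pin down precisely how the two a priori different descriptions of the Tate module — the ``Fontaine-style'' description inside $W(\bar\EE)$ coming from $\FS^{\nr}$, and the ``display-style'' description inside $\hat\WW(\tilde R)$ — are matched by $\varkappa^{\nr}$, and verify that no spurious discrepancy (a $p$-adic unit, or a failure of surjectivity) creeps in. Concretely, for $G$ \'etale one knows $\OOO_{\widehat{\EEE^{\nr}}} \otimes_{\ZZ_p} T^{\nr}_\EEE(M,\phi) \xrightarrow{\sim} \OOO_{\widehat{\EEE^{\nr}}} \otimes_\FS M$ by \cite[A 1.2]{Fontaine-90}, and one needs the parallel fact that $\hat\WW(\tilde R) \otimes_{\ZZ_p} T(\hat\PPP_{\!\hat\DDD}) \xrightarrow{\sim} \hat P_{\hat\DDD}$ in the \'etale case, which follows from Proposition \ref{Pr-Tate-perfect} since for an \'etale group $F_1 - 1$ on $\hat P_{\hat\DDD} = \hat Q_{\hat\DDD}$ is an ``\'etale $\phi$-module'' situation where base change to the big ring trivializes. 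Granting both, $\tau(\PPP^{\nr})$ becomes the map induced on Frobenius-invariants by $\varkappa^{\nr}$, which is compatible with these trivializations, hence bijective. Once the \'etale and rank-one cases are secured, the duality bootstrap is formal, using only the machinery of Section \ref{Se-frames} and the fact that $G^\vee$ is \'etale precisely when $G$ is of multiplicative type — so in general one does \emph{not} get an \'etale group from dualizing, and one instead argues via the perfect-pairing/rank-count mechanism described above rather than by reducing literally to the \'etale case.
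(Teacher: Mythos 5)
Your overall architecture --- settle the \'etale case directly, then bootstrap to general $\PPP$ via the duality pairing $\PPP\times\PPP^t\to\BBB$ from Section \ref{Se-frames} --- is exactly the paper's, and both the \'etale case and the final formal deduction are essentially right. But there is a genuine gap at the step you dispose of as ``(ii) check that $\tau(\BBB)$ is bijective, again a direct rank-one computation.'' This is the case $\PPP=\BBB$, i.e.\ $G=\mu_{p^\infty}$, and it is the heart of the whole proof; no direct computation is available. Unlike the \'etale case, where \eqref{Eq-A2} guarantees that the invariants generate the whole module over $\OOO_{\widehat{\EEE^{\nr}}}$ and the comparison reduces to matching bases through the local homomorphism $\ZZ_p\to\hat\WW(\tilde R)$, here you must compare $T(\BBB^{\nr})=(E\FS^{\nr})^{\sigma_1=1}$ with $T(\hat\DDD_{\tilde R})=(\hat\II_{\tilde R})^{\ff_1=1}$: both are free of rank one and $\tau_{\cc}$ is injective and Frobenius-compatible, but nothing formal prevents its image from being $p^nT(\hat\DDD_{\tilde R})$ for some $n>0$, since $\hat\WW(\tilde R)$ is much larger than $\FS^{\nr}$ and may contain extra solutions of $\ff_1(x)=x$. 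The observation that ``$\varkappa^{\nr}$ is an isomorphism onto its image compatibly with Frobenius'' gives only injectivity, which you already have; it says nothing about surjectivity --- precisely the ``failure of surjectivity'' you flag as the main obstacle and then do not rule out.

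The paper closes this gap with a genuinely non-formal argument. Suppose $\tau(\BBB^{\nr})$ were divisible by $p$. For any extension of $\BBB$-windows $0\to\BBB\to\PPP_1\to\PPP_0\to 0$ with $\PPP_0$ corresponding to $\QQ_p/\ZZ_p$, the image of $\tau(\PPP_1^{\nr})$ would then split the reduction modulo $p$ of the sequence $0\to T(\hat\DDD_{\tilde R})\to T((\hat\PPP_1)_{\hat\DDD})\to T((\hat\PPP_0)_{\hat\DDD})\to 0$, forcing the composite $\Ext^1_R(\QQ_p/\ZZ_p,\mu_{p^\infty})\to\Ext^1_K(\ZZ/p\ZZ,\mu_p)\to\Ext^1_{K_\infty}(\ZZ/p\ZZ,\mu_p)$ to vanish. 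After enlarging $k$ to an uncountable perfect field (which does not change $\tau(\BBB)$), the first map is identified with $1+\Fm_R\to K^*/(K^*)^p$ and so has uncountable image, while the kernel of restriction to $K_\infty$ is countable because $K_\infty$ is a countable union of finite extensions of $K$ and each such restriction on $H^1(-,\mu_p)$ has finite kernel; this is a contradiction. Some input of this kind --- Kummer theory plus a cardinality argument, or an equivalent explicit analysis of $\ZZ_p(1)$ inside both rings --- is unavoidable here, and your proposal does not contain it. (A smaller point: you do not need perfectness of the lower pairing as an input; the paper only uses that it is a bilinear form of free $\ZZ_p$-modules of rank $h$, perfectness being a consequence once the vertical maps are known to be bijective.)
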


\begin{proof}
Let $h$ be the $\FS$-rank of $M$.
The source and target of $\tau(\PPP^{\nr})$
are free $\ZZ_p$-modules of rank $h$ which
are exact functors of $\PPP$; 
this is true for $T(\PPP^{\nr})$ 
since \eqref{Eq-B1} and \eqref{Eq-A2} are bijective,
and for $T(\hat\DDD_{\tilde R})$
by Proposition \ref{Pr-Tate-perfect}.

Consider first the case where the $p$-divisible
group associated to $\PPP$ is \'etale, which means
that $\PPP=(P,Q,F,F_1)$ has $P=Q$, and
$F_1:Q\to P$ is a $\sigma$-linear isomorphism.
Then a $\ZZ_p$-basis of $T(\PPP^{\nr})$ is
an $\FS^{\nr}$-basis of $P^{\nr}$, and a
$\ZZ_p$-basis of $T(\hat\PPP_{\hat\DDD})$
is a $\hat\WW(\tilde R)$-basis of $\hat P_{\tilde R}$. 
Since $\ZZ_p\to\hat\WW(\tilde R)$ is a local homomorphism
it follows that $\tau(\PPP^{\nr})$ is bijective.

Consider next the case $\PPP=\BBB$, which corresponds to the
$p$-divisible group $\mu_{p^\infty}$. 
Assume that the proposition does not hold for $\BBB$, 
i.e.\ that $\tau(\BBB^{\nr})$ is divisible by $p$. 
We may replace $k$ be an arbitrary perfect extension
since this does not change $\tau(\BBB)$, in particular
we may assume that $k$ is uncountable.
Let $\PPP_0$ be the etale
$\BBB$-window that corresponds to $\QQ_p/\ZZ_p$.
We consider extensions of $\BBB$-windows
$0\to\BBB\to\PPP_1\to\PPP_0\to 0$,
which correspond to extensions in 
$\Ext^1(\QQ_p/\ZZ_p,\mu_{p^\infty})$.
The image of $\tau(\PPP_1^{\nr})$ provides a
splitting of the reduction modulo $p$ of the 
exact sequence 
\[
0\to T(\hat\DDD_{\tilde R})\to T((\hat\PPP_1)_{\hat\DDD})
\to T((\hat\PPP_0)_{\hat\DDD})\to 0
\]
and thus the natural homomorphism
\begin{equation}
\label{Eq-zero}
\Ext^1_R(\QQ_p/\ZZ_p,\mu_{p^\infty})
\to\Ext^1_K(\ZZ/p\ZZ,\mu_p)\to
\Ext^1_{K_\infty}(\ZZ/p\ZZ,\mu_p)
\end{equation}
is zero. Now the first arrow in \eqref{Eq-zero} can be identified
with the obvious homomorphism of multiplicative groups
$1+\Fm_R\to K^*/(K^*)^p$; see \cite[Lemma 7.2]{Lau-Tate}
and its proof.
By our assumption on $k$ its image is uncountable.
Since for a finite extension $K'/K$ the homomorphism
$H^1(K,\mu_p)\to H^1(K',\mu_p)$ has finite kernel,
the kernel of the second map in \eqref{Eq-zero}
is countable. Thus the composition \eqref{Eq-zero}
cannot be zero, and the proposition is proved for $\PPP=\BBB$.

Finally let $\PPP$ be arbitrary.
Duality gives the following commutative diagram;
see Section \ref{Se-frames}.
\[
\xymatrix@M+0.2em{
T(\PPP^{\nr})\times T(\PPP^{t\nr}) 
\ar[r] \ar[d]_{\tau(\PPP^{\nr})\times\tau(\PPP^{t\nr})} &
T(\BBB^{\nr}) \ar[d]^{\tau(\BBB^{\nr})} \\
T(\hat\PPP_{\!\hat\DDD})\times T(\hat\PPP^t_{\!\hat\DDD}) 
\ar[r] &
T(\hat\DDD_{\tilde R})
}
\]
Since \eqref{Eq-B1} and \eqref{Eq-A2} are bijective,
the upper line of the diagram is a perfect bilinear 
form of free $\ZZ_p$-modules of rank $h$.
Proposition \ref{Pr-Tate-perfect} implies that 
the lower line is a bilinear form of free 
$\ZZ_p$-modules of rank $h$. 
We have seen that $\tau(\BBB^{\nr})$ is bijective.
These properties imply that $\tau(\PPP^{\nr})$ is bijective.
\end{proof}


\end{document}